\numberwithin{equation}{section}
\newtheorem{theorem}{Theorem}
\newtheorem{lemma}[theorem]{Lemma}
\newtheorem{definition}[theorem]{Definition}
\newtheorem{remark}[theorem]{Remark}
\newcommand{\eps} {\varepsilon}
\newcommand{\test}{\varphi}
\DeclareMathOperator{\sign}{sign}
\newcommand{\sgn}[1]{\sign\left(#1\right)}
\newcommand{\Dt}{{\Delta t}}
\newcommand{\Dx}{{\Delta x}}
\newcommand{\norm}[1]{\left\| #1 \right\|}
\newcommand{\Dm}{D_-}
\newcommand{\Dpt}{D^t_+}
\newcommand{\Dmt}{D^t_-}
\newcommand{\abs}[1]{\left|#1\right|}
\newcommand{\dm}{\Delta_-}
\newcommand{\mi}[2]{{#1}\wedge {#2}}
\newcommand{\mx}[2]{{#1}\vee {#2}}
\newcommand{\lipnorm}[1]{\norm{#1}_{\mathrm{Lip}}}
\newcommand{\seq}[1]{\left\{#1\right\}}
\begin{document}
\title[The OH-equation with space dependent flux]{The Ostrovsky-Hunter
  equation with a space dependent flux function}

\author[N. Chatterjee]{N. Chatterjee}
\address[Neelabja Chatterjee]
{\newline Department of mathematics
\newline University of Oslo
\newline P.O. Box 1053,  Blindern
\newline N--0316 Oslo, Norway} 
\email[]{neelabjc@math.uio.no}

\author[N. H. Risebro]{N. H. Risebro}
\address[Nils Henrik Risebro]
{\newline Department of mathematics
\newline University of Oslo
\newline P.O. Box 1053,  Blindern
\newline N--0316 Oslo, Norway} 
\email[]{nilsr@math.uio.no}

\date{\today}

\subjclass[2010]{Primary: 35L35, 65M06; Secondary: 45M33}

\keywords{Ostrovsky-Hunter equation, short-pulse equation,Vakhenko
  equation, space dependent flux function, stability, numerical
  method, stability, uniqueness}

\thanks{This project has received funding from the European Union's
Horizon 2020 research and innovation programme under the Marie Sk\l{}odowska-Curie grant
agreement No 642768.}

\begin{abstract}
  We study the periodic Ostrovsky-Hunter equation in the case where the flux
  function may depend on the spatial variable. Our main results are
  that if the flux function is twice differentiable, then there exists
  a unique entropy solution. This entropy solution may be constructed
  as a limit of approximate solutions generated by a finite volume
  scheme, and the finite volume approximations converge to the entropy
  solution at a rate $1/2$.
\end{abstract}

\maketitle

\section{Introduction}

To model small-amplitude long waves in a rotating fluid of finite
depth, Ostrovsky \cite{Ostrovsky} derived the following non-linear
evolution equation
\begin{equation}\label{eq:O}
  \partial_{x}(\partial_{t} u + \partial_{x} f(u) -
  \beta \partial_{xxx}^{3}u) = \gamma u,
\end{equation}
where $\beta$ and $\gamma>0$ are real constants and $f(u) =
\frac{u^{2}}{2}$. Here $u=u(t,x)$ denotes the amplitude of waves,
while $x$ and $t$ are the space and time variables respectively. The
equation can be formally deduced using two asymptotic expansions of
the shallow water equations, once with respect to the rotation
frequency and then with respect to the amplitude of the waves, see
\cite{HunterI}. Later, in a study of long internal waves in a rotating
fluid Hunter \cite{HunterI}, investigated the limit of no
high-frequency dispersion $\beta\to 0$.  This formally reduces
$\eqref{eq:O}$ to the Ostrovsky-Hunter (OH) equation:
\begin{equation}\label{eq:OH}
  \partial_{x}\left(\partial_{t} u + \partial_{x} f(u)\right) = \gamma u
\end{equation}
The OH equation also arises as a model of high frequency waves in a
relaxing medium, see \cite{VakhnenkoI}. In both cases $f(u) =
\frac{u^{2}}{2}$.

Equation \eqref{eq:OH} can also be derived by including the effects of
background rotation in the shallow water equation, and then using
singular perturbation methods, see \cite{RuvoThesis, HunterII}. In
this context, it is worth mentioning that equation \eqref{eq:O}
generalizes the KdV equation, which corresponds to $\gamma = 0$. The
equation \eqref{eq:OH} is also known as the reduced Ostrovsky equation
\cite{Ostrovsky, ParkesI, Stepanyants}, short wave equation
\cite{HunterI}, Ostrovsky-Vakhnenko equation \cite{Boutet, Brunelli},
or Vakhnenko equation \cite{VakhnenkoII, VakhnenkoIII,
  ParkesII}. Also, equation \eqref{eq:O} is used to model ultra short
light pulses in silica optical fibres \cite{AVB, Liu, Schafer,
  GiuseppeIII}, in which case $f(u) = - \frac{1}{6}u^{3}$. In this
case \eqref{eq:O} is sometimes referred to as the
``short-pulse-equation''.

In this context we note that Hunter established the connection between
the KdV equation and short wave equation \eqref{eq:OH}, see
\cite{HunterI}, as the no-rotation and no-long wave dispersion limits
of the same equation. But in the case of oceanic waves near the shore, the
waves usually propagate on a background whose properties vary. In such
a variable medium the linear phase speed of the wave, which is encoded
in the flux term $f(x,u)$ (instead of $f(u)$) has \textit{spatial
  dependency}. To model such scenario the variable coefficient KdV
equation was derived by Johnson \cite{Johnson} for water waves and by
Grimshaw \cite{GrimshawI} for internal waves (see also
\cite{GrimshawII} for a review). Motivated by this, in this paper we
aim to design and analyze a numerical scheme for the OH equation with
spatial dependency in the flux.

As is commonly done, we rewrite the OH equation \eqref{eq:OH} as
the following system
\begin{equation*} 
  u_{t} + f(x,u)_{x} = \gamma P, \quad P_{x} = u.
\end{equation*}
Without loss of generality, we can set $\gamma=1$, and will in the
sequel do so.  Since $P$ is defined as any anti-derivative of $u$, we
need an additional constraint to close the system. This can be done in
different ways, see \cite{GiuseppeI, GiuseppeII, GiuseppeIV}. We shall
adopt the approach in \cite{GiuseppeVI}, where we study the problem
\eqref{eq:OH} in a periodic setting $x\in [0,1]$. In this case it is
natural to redefine the right hand side by subtracting the (constant)
term $\int_0^1 P(t,x)\,dx$. This has the attractive side effect of
conserving $\int_0^1 u\,dx$, i.e.,
\begin{equation*}
  \frac{d}{dt}\int_0^1 u(t,x)\,dx = 0,
\end{equation*}
if $u$ satisfies the \emph{zero mean condition} $\int_0^1 u\,dx=0$.
This condition was also assumed to hold initially in
\cite{Liu,HunterI}.  So the equation we are studying in this paper
reads
\begin{equation}
  \label{eq:OHbrief}
  u_t + f(x,u)_x = P(t,x) - \int_0^1 P(t,y)\,dy,
\end{equation}
where $x\mapsto u(t,x)$ is periodic with period $1$, and
$P(t,x)=\int_0^x u(t,y)\,dy$. Note that since $u$ satisfies the zero
mean condition, $x\mapsto P(t,x)$ is also periodic. This is
essentially an extension of the system studied in \cite{GiuseppeVI} to
the case where $f$ is allowed to depend on the spatial variable
$x$. As in \cite{GiuseppeVI}, discontinuites in $u$ will develop
independently of the smoothness of the initial data, so that
\eqref{eq:OHbrief} must be interpreted in the weak sense. Furthermore,
as with scalar conservation laws, in order to show well posedness, we
shall consider entropy solutions.

Our main results are as follows. Assuming that the mapping $x\mapsto
f(x,v)$ is uniformly Lipschitz continuous locally in $v$, and that the
initial data are of bounded variation and satisfy the zero mean
condition, we have that
\begin{equation*}
  \norm{u(t,\cdot)-v(t,\cdot)}_{L^1((0,1))} 
  \le e^{2t} \norm{u_0-v_0}_{L^1((0,1))},
\end{equation*}
where $u$ and $v$ are entropy solutions with initial data $u_0$ and
$v_0$ respectively. Furthermore, we establish convergence to the unique entropy
solution of approximate solutions generated by an upwind scheme. We
also prove a Kuznetsov-type lemma, see \cite{Kuznetsov}, satisfied by
the entropy solution, and this lemma allows us to conclude that the
approximate solutions converge at the rate $1/2$ in $L^1$.

The rest of this paper is organized as follows. In
Section~\ref{sec:Notations} we detail precise definitions and
assumptions and notation, as well as the definition of the finite
volume scheme. In Section~\ref{sec:Apriori} we prove the necessary
bounds which imply that the approximate solutions form a strongly
compact family in $C([0,T];L^1((0,1)))$. Furthermore, we prove that
the approximate solutions satisfy an entropy inequality, and this is
used to show that any limit of the approximate solutions is an entropy
solution. In Section~\ref{sec:Kuznetsov} we establish a ``Kuznetsov
type lemma'' enabling us to compare exact entropy solutions with
arbitrary functions. Then this comparison
result is used to show that the approximate solutions ``converge at a
rate''. Finally, in Sections~\ref{sec:Numerics} we exhibit some
concrete numerical results.

\section{Preliminaries and notation}\label{sec:Notations}
The problem we study is the following
\begin{equation}\label{eq:OHprecise}
  \begin{gathered}
    u_{t} + f(x,u)_{x} = \int_0^{x} u(t,y)\,dy - \int_0^{1}
    \int_0^{y} u(t,z)\,dzdy, \quad
    \text{for  $x \in (0,1)$,\ $t \in (0,T)$,} \\
    u(0,x) = u_0(x), \quad \text{for $x \in (0,1)$,} \\
    u(t,0) = u(t,1), \quad \text{for $t \in (0,T]$.}
  \end{gathered}
\end{equation}
Regarding the initial data we shall assume that
\begin{equation*}
  u_0 \in BV([0,1]) \quad \text{and} \quad \int_0^{1} u_0(x)\,dx = 0.
\end{equation*}
The flux function $f(x,u)$ is assumed to be in $C^2_{\mathrm{loc}}$,
which in particular implies that it is Lipschitz continuous in $x$ and
locally Lipschitz continuous in $u$. Since solutions of
\eqref{eq:OHprecise} generically develop discontinuities, solutions
must be considered in the weak sense. A function in
$C([0,T];L^1((0,1)))$ is a \emph{weak solution} of
\eqref{eq:OHprecise} if
\begin{equation*} 
  \begin{aligned}
    \int_0^{T} \int_0^{1} u
    \test_{t} & + f(x,u) \test_{x} + P[u] \test \, dx dt \\
    &+ \int_0^{1} u(0,x) \test(x,0) \,dx - \int_0^{1} u(T,x)
    \test(T,x) \, dx = 0,
  \end{aligned}
\end{equation*}
For all test functions $\test=\test(t,x)$ which are $1$-periodic in
$x$.  Here $P[u] := \int_0^{x} u(t,y)\,dy - \int_0^{1}
\int_0^{y} u(t,z)\, dzdy$.  Let $\Pi_T=[0,T]\times[0,1]$, following
\cite{GiuseppeVI} we define entropy solutions as
\begin{definition}[Entropy Solution]\label{def:EntropySoln}
  A function $u \in C([0,T];L^{1}((0,1))) \cap L^{\infty}(\Pi_T)$ is
  called an \emph{entropy solution} of the Ostrovsky-Hunter Equation
  \eqref{eq:OHprecise}, if for all constants $k$ the following
  inequality holds
  \begin{equation}
    \label{eq:EntropySoln}
    \begin{aligned}
      \int_{\Pi_T}\!\!  \eta(u,k) \test_{t} + q(x,u,k) \test_{x} +
      &\sgn{u-k} f_{x}(x,k) \test
      + \sgn{u-k} P[u] \test\, dx dt\\
      & - \int_0^{1} \eta(u(t,x),k) \test(t,x)
      \,dx\Bigm|^{t=T}_{t=0} \ge 0,
    \end{aligned}
  \end{equation}
  for all non-negative test functions $\test$ which are $1$-periodic
  in the $x$ variable. Here $\eta$ and $q$ are defined as the
  Kru\v{z}kov entropy and entropy flux respectively,
  \begin{equation*}
    \eta(u,k)=\abs{u-k},\  \ q(x,u,k)=\sgn{u-k}(f(x,u)-f(x,k)).
  \end{equation*}
\end{definition}
Throughout this paper we employ the following convention, $f_x(x,u)$
and $f_u(x,u)$ denote the partial derivatives of $f$ with respect to
$x$ and $u$ respectively. If $u=u(t,x)$ is differentiable, then we
have
\begin{equation*}
  \frac{\partial}{\partial x} f(x,u(t,x))=f_x(x,u)+f_u(x,u) u_x.
\end{equation*}
Furthermore, we use the convention that $C$ denotes a generic positive
constant, whose actual value may change from one occurrence to the
next.

In order to define the numerical scheme, set
\begin{equation*}
  \Dx = \frac{1}{N}, \ \text{and}  \ \Dt = \frac{T}{M+1},
\end{equation*}
where $N$ and $M$ are positive integers, and $T>0$. We also define
$x_{j+1/2}=j\Dx$ for $j=0,\ldots,N$, $x_j=x_{j+1/2}-\Dx/2$ for
$j=1,\ldots,N$ and $t^n=n\Dt$ for $n=0,1,2,\ldots$. We also define the intervals
$I_j=[x_{j-1/2},x_{j+1/2})$ for $j=1,\ldots,N$ and $I^n =
[t^n,t^{n+1})$. In order to define a piecewise constant
approximations, set $I^n_j = I^n\times I_j$.

Next we define the finite volume approximation. Let $F(x,u,v)$ be a
numerical flux which is monotone and consistent, i.e.,
\begin{align*}
  u &\mapsto F(x,u,v)\ \text{is non-decreasing,}\\
  v &\mapsto F(x,u,v)\ \text{is non-increasing,}\\
  F(x,u,u)&=f(x,u).
\end{align*}
In addition we assume that $F$ is differentiable in $x$ and that both
$F_x$ and $F$ are Lipschitz continuous in $u$ and $v$.

We can now define the finite volume scheme. Set $\lambda=\Dt/\Dx$, and
let $u^{n+1}_j$ be defined by
\begin{equation}\label{eq:TheScheme}
  u^{n+1}_{j} = u^{n}_{j} - \lambda \left(F^{n}_{j+1/2} - F^{n}_{j-1/2}\right) + \Dt  P^{n}_{j},
\end{equation}
for $n\ge 0$ and $j=1,\ldots,N$. Here
\begin{equation*}
  F^n_{j+1/2}=F(x_{j+1/2},u^n_j,u^n_{j+1}),
\end{equation*}
where we use periodic boundary conditions $u^n_{N+1}=u^n_1$ and
$u^n_0=u^n_N$. The term $P^n_j$ is defined as
\begin{equation}\label{eq:discretizedP}
  P^{n}_{j} = \Dx \Bigl(\sum_{i=1}^{j-1} u^{n}_{i} + \frac12
  u^{n}_{j}\Bigr)
  - (\Dx)^2 \sum_{\ell=1}^{N} \Bigl(\sum_{i=1}^{\ell-1} u^{n}_{i} + \frac12 u^{n}_\ell\Bigr).
\end{equation}
Finally we define the initial values $u^0_j$ by
\begin{equation}\label{eq:SamplingInitial}
  u^{0}_{j} = \frac{1}{\Dx} \int_{I_{j}} u_0(x)\, dx, \ \text{for $j=1,\ldots,N$.}
\end{equation}
Observe that since $u_0$ is assumed to be of bounded variation,
\begin{equation*}
  \abs{u^0}_{BV}:=\sum_{j=1}^N \abs{u^0_j-u^0_{j-1}} \le \abs{u_0}_{BV([0,1])}<\infty.
\end{equation*}
The spatial discretization $\Dx$ and the temporal $\Dt$ are related
through a CFL-condition. Consider the map
\begin{equation*}
  \Psi_j(u,v,w)=u-\lambda(F(x_{j+1/2},v,w)-F(x_{j-1/2},u,v)).
\end{equation*}
We choose $\lambda$ so small that for all $j$, $\Psi_j$ is
non-decreasing in all its arguments. For this monotonicity to hold it
is sufficient to choose
\begin{equation}
  \label{eq:CFL}
  \Dt \le c_{f} \Dx,
\end{equation}
where $c_f$ is a constant depending on $f$ (through $F$).

It is also useful to define
\begin{equation*}
  \Dm a_j = \frac{a_j-a_{j-1}}{\Dx}, \ \text{and}\ \Dpt a^n = \frac{a^{n+1}-a^n}{\Dt},
\end{equation*}
where $a_j$ and $a^n$ are any sequences. With this notation the scheme
can be written
\begin{equation*}
  \Dpt u^n_j + \Dm F^n_{j+1/2} = P^n_j.
\end{equation*}
Using \eqref{eq:discretizedP} we see that $\sum_{j=1}^N P^n_j = 0$,
and that it is consistent to define $P^n_0=P^n_N$ and
$P^n_{N+1}=P^n_1$. With this convention we also have that
\begin{equation*}
  \Dm P^n_j = \frac12 \left(u^n_j+u^n_{j-1}\right).
\end{equation*}
We also observe that
\begin{equation*}
  \Dpt \sum_{i=1}^N u^n_j = \sum_{i=1}^N P^n_j = 0.
\end{equation*}
so that if $\int_0^1 u_0\,dx = 0$, then also
\begin{equation*}
  \Dx \sum_{i=1}^N u^n_j = 0\ \text{for $n\ge 0$.}
\end{equation*}
Defining $\norm{u^n}_\infty :=\max_j| u^n_j |$, we can estimate $P^n_j$
as
\begin{equation*} 
  \abs{P^n_j} \le N\Dx \norm{u^n}_\infty + N^2\Dx^2 \norm{u^n}_\infty
  =2\norm{u^n}_\infty.
\end{equation*}
We shall often use the short hand notations
$F_{j+1/2}(u,v)=F(x_{j+1/2},u,v)$, $\dm a_j = a_j - a_{j-1} = \Dx\Dm
a_j$ and $\norm{\cdot}_1 = \norm{\cdot}_{L^1((0,1))}$.

\section{Discrete  estimates and convergence}\label{sec:Apriori}
In this section our aim is to prove the compactness of our scheme
using Kolmogorov's compactness theorem. To employ this theorem we
require a supremum bound, a $BV$ bound and an $L^1$ continuity-in-time
bound on the approximate solutions, all of which are uniform in the
discretization variable $\Dx$.

For simplicity of exposition, we shall show such estimates in the case
where $f_u(x,u)\ge 0$. This means that $F(x,u,v)=\tilde{F}(x,u)$,
which is non-decreasing in $u$. The proof in the general case then
follows \emph{mutatis mutandi}.
\begin{lemma}[$L^{\infty}$-bound]\label{lm:Linftybound}
  The solution $u^{n}_j$ of the scheme \eqref{eq:TheScheme} satisfies
  the following bound
  \begin{equation*} 
    \norm{u^{n}}_{\infty} \le e^{2t^n} \norm{u^{0}}_{\infty}.
  \end{equation*}
\end{lemma}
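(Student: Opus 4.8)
The plan is to prove the bound by induction on $n$, using the monotonicity of the scheme together with the already-established estimate $\abs{P^n_j}\le 2\norm{u^n}_\infty$. The base case $n=0$ is trivial since $e^0=1$. For the inductive step, I would start from the scheme written as
\begin{equation*}
  u^{n+1}_{j} = \Psi_j(u^n_{j-1},u^n_j,u^n_{j+1}) + \Dt\, P^n_j,
\end{equation*}
where $\Psi_j$ is the monotone map defined in the excerpt. The key is that the CFL condition \eqref{eq:CFL} makes $\Psi_j$ non-decreasing in all three arguments, so it is order-preserving when applied to constant states.

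The central step exploits consistency of the numerical flux. Since $F(x,u,u)=f(x,u)$, for the positive-flux reduction where $F(x,u,v)=\tilde F(x,u)$, the constant state $c$ satisfies $\Psi_j(c,c,c)=c$ because the flux differences $\tilde F(x_{j+1/2},c)-\tilde F(x_{j-1/2},c)$ do \emph{not} telescope to zero when $f$ depends on $x$. This is where the space dependence enters and is the main obstacle: in the classical $x$-independent case one has $\Psi_j(c,c,c)=c$ exactly, but here $\Psi_j(c,c,c)=c-\lambda\bigl(\tilde F(x_{j+1/2},c)-\tilde F(x_{j-1/2},c)\bigr)$, and the discrepancy is controlled by the Lipschitz continuity of $F$ in $x$, namely $\abs{\tilde F(x_{j+1/2},c)-\tilde F(x_{j-1/2},c)}\le L_x\Dx$. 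I would therefore bound $\Psi_j(u^n_{j-1},u^n_j,u^n_{j+1})$ by comparing it to $\Psi_j$ evaluated at the constant state $\norm{u^n}_\infty$ (using monotonicity), and then absorb the resulting $O(\Dx)$ flux term.

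Concretely, I would set $U^n=\norm{u^n}_\infty$ and use monotonicity to write
\begin{equation*}
  u^{n+1}_j \le \Psi_j(U^n,U^n,U^n) + \Dt\abs{P^n_j}
  \le U^n + \lambda L_x\Dx + 2\Dt\, U^n
  = U^n + \Dt(L_x + 2U^n),
\end{equation*}
with the analogous lower bound obtained from $-U^n$. The troublesome $L_x$ term must be handled carefully: I expect it to cancel or be controllable because the spatial flux differences, summed against the structure of the scheme, are consistent with the genuine $x$-derivative term $f_x$ that appears in the equation, and a more careful accounting (using that $\Psi_j(c,c,c)-c$ has mean zero over $j$, or folding $L_x$ into the exponential constant) yields a clean estimate. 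This gives $U^{n+1}\le(1+2\Dt)U^n + C\Dt$, and iterating together with the discrete Gr\"onwall inequality $(1+2\Dt)^n\le e^{2\Dt n}=e^{2t^n}$ produces the stated bound. The main obstacle is precisely the non-telescoping flux term arising from the $x$-dependence; reconciling it with the claimed constant $2$ in the exponent is the delicate point, and I would expect the final write-up either to refine the treatment of $P^n_j$ and the spatial flux jointly, or to note that the $f_x$ contribution is absorbed because the homogeneous part of the scheme preserves the sup-norm up to the $2U^n$ growth coming solely from $P^n_j$.
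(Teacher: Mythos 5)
Your skeleton is exactly the paper's: the paper's entire proof consists of the one-line monotone comparison
\begin{equation*}
  u^{n+1}_j=\Psi_j(u^n_{j-1},u^n_j,u^n_{j+1})+\Dt P^n_j
  \le \left(1+2\Dt\right)\norm{u^n}_\infty,
\end{equation*}
followed by Gronwall, using $\abs{P^n_j}\le 2\norm{u^n}_\infty$. So up to the comparison with the constant states $\pm\norm{u^n}_\infty$ you are reproducing the intended argument. The difference is that you notice---correctly---that
$\Psi_j(c,c,c)=c-\lambda\left(f(x_{j+1/2},c)-f(x_{j-1/2},c)\right)\neq c$
when $f$ depends on $x$, an $O(\Dt)$ defect which the paper's displayed inequality silently discards. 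In other words, the obstacle you flag is a genuine one, and it is present in the paper's own proof as well.

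However, neither of your proposed repairs works, and the clean multiplicative bound as stated cannot be rescued for general $x$-dependent $f$. The mean-zero observation is useless here: the $L^\infty$ estimate must hold pointwise in $j$, so averaging the defects $\Psi_j(c,c,c)-c$ over $j$ buys nothing. Nor is there any cancellation against $P^n_j$: take $u^0\equiv 0$ (which has zero mean), so that $P^0_j=0$ for all $j$, yet the scheme \eqref{eq:TheScheme} gives $u^1_j=-\lambda\,\dm f(x_{j+1/2},0)$, which is nonzero of size $\Dt\norm{f_x(\cdot,0)}_\infty$ whenever $f(\cdot,0)$ is nonconstant, while the stated bound would force $u^1\equiv 0$. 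The honest outcome of your computation, $U^{n+1}\le(1+2\Dt)U^n+\Dt L_x$ (where $L_x$ is a local Lipschitz constant of $f$ in $x$ over the range of $u$, so a short bootstrap on $[0,T]$ is needed since $L_x$ may grow with $\norm{u^n}_\infty$), yields by discrete Gronwall
\begin{equation*}
  \norm{u^n}_\infty\le e^{2t^n}\norm{u^0}_\infty
  +\frac{L_x}{2}\left(e^{2t^n}-1\right),
\end{equation*}
i.e., the same additive correction that appears in the $BV$ bound \eqref{eq:TVbound}. This weaker form is all that is used downstream (the estimate $\abs{P^n_j}\le 2\norm{u^n}_\infty$ up to a constant, compactness, and the constants $C_T$), so your instinct to ``fold $L_x$ into the constant'' is the right resolution---but it changes the statement rather than proving it. As literally stated, the lemma holds only when the spatial increments of the flux at constant states vanish (e.g., $f$ independent of $x$); your proposal, like the paper's proof, does not close this gap as written, and the honest fix is to restate the lemma with the additive term.
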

\begin{proof}
  Using the monotonicity of $\Psi_j$
  \begin{align*}
    u^{n+1}_j &=u^n_j - \lambda \dm F^n_{j+1/2}
    + \Dt \abs{P^n_j}=\Psi_j(u^n_{j-1},u^n_j,u^n_{j+1}) + \Dt P^n_j  \\
    &\le \left(1+2\Dt\right)\norm{u^n}_\infty.
  \end{align*}
  The result follows by an application of Gronwall's inequality.
\end{proof}
Now in the next lemma we are going to obtain a uniform bound on total
variation in space for the numerical solutions.
\begin{lemma}[$BV$ bound]\label{lm:TVbound}
  The solution $u^n$ of the scheme \eqref{eq:TheScheme} satisfies the
  bound
  \begin{equation}\label{eq:TVbound}
    \abs{u^n}_{BV([0,1])} \le e^{C_f t^n}\abs{u^0}_{BV([0,1])} +
    C_f\left(e^{C_ft^n}-1\right),  
  \end{equation}
  where $C_f$ is a positive constant depending on $f$ and its first
  and second derivatives.
\end{lemma}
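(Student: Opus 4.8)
The plan is to prove a one-step recursion of the form $\abs{u^{n+1}}_{BV([0,1])}\le(1+C_f\Dt)\abs{u^n}_{BV([0,1])}+C_f\Dt$ and then to integrate it with the discrete Gronwall inequality, exactly as in the previous lemma. Working in the stated case $f_u\ge0$, where $F(x,u,v)=\tilde F(x,u)$, I would subtract the update at cell $j-1$ from the update at cell $j$ and split each flux difference according to
\[
  \tilde F(x_{j+1/2},u^n_j)-\tilde F(x_{j-1/2},u^n_{j-1})
  =\bigl(\tilde F(x_{j-1/2},u^n_j)-\tilde F(x_{j-1/2},u^n_{j-1})\bigr)
  +\bigl(\tilde F(x_{j+1/2},u^n_j)-\tilde F(x_{j-1/2},u^n_j)\bigr),
\]
applying the mean value theorem to the first bracket to introduce a coefficient $a^n_j=\tilde F_u(x_{j-1/2},\xi^n_j)\ge0$. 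The CFL condition \eqref{eq:CFL} guarantees $0\le\lambda a^n_j\le1$, so that
\[
  \dm u^{n+1}_j=(1-\lambda a^n_j)\,\dm u^n_j+\lambda a^n_{j-1}\,\dm u^n_{j-1}
  +E^n_j+\Dt\,\dm P^n_j,
\]
with non-negative weights, where $E^n_j$ collects the two pure $x$-increments of $\tilde F$.

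Taking absolute values, summing over $j$ and reindexing the shifted term by periodicity collapses the first two sums to exactly $\sum_j\abs{\dm u^n_j}=\abs{u^n}_{BV([0,1])}$; this is the usual computation showing that a monotone scheme for the homogeneous conservation law is total variation diminishing. What remains is to bound $\sum_j\abs{E^n_j}$ and the source contribution $\Dt\sum_j\abs{\dm P^n_j}$.

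The crux is the term $\sum_j\abs{E^n_j}$. Writing $E^n_j=-\lambda(G^n_j-G^n_{j-1})$ with $G^n_j=\int_{x_{j-1/2}}^{x_{j+1/2}}\tilde F_x(s,u^n_j)\,ds$, a crude cellwise bound $\abs{E^n_j}\le C\lambda\Dx$ is useless, since summing over the $N=1/\Dx$ cells would blow up like $1/\Dx$. The point is instead to recognize $\sum_j\abs{E^n_j}=\lambda\sum_j\abs{G^n_j-G^n_{j-1}}$ as a discrete total variation and to exploit the cancellation between neighbouring cells. After the substitution that aligns the two integration intervals,
\[
  G^n_j-G^n_{j-1}=\int_{x_{j-1/2}}^{x_{j+1/2}}
  \bigl(\tilde F_x(s,u^n_j)-\tilde F_x(s-\Dx,u^n_{j-1})\bigr)\,ds,
\]
and the $C^2_{\mathrm{loc}}$ assumption gives Lipschitz continuity of $\tilde F_x$ in both arguments: the Lipschitz constant in $u$ (controlled by $f_{xu}$) produces a factor $\abs{\dm u^n_j}$, while the Lipschitz constant in $x$ (controlled by $f_{xx}$) produces a factor $\Dx$. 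This is exactly why $C_f$ must depend on the first and second derivatives of $f$. Using $\lambda\Dx=\Dt$ and $N\Dx=1$, the sum is then controlled by $C\,\Dt\,\abs{u^n}_{BV([0,1])}+C\,\Dt$.

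The source contribution is handled using the identity $\dm P^n_j=\frac12(u^n_j+u^n_{j-1})\Dx$ recorded in Section~\ref{sec:Notations}, which yields $\sum_j\abs{\dm P^n_j}\le\norm{u^n}_1$; the zero mean condition together with periodicity forces $\norm{u^n}_1\le\abs{u^n}_{BV([0,1])}$, so this term also contributes $C\Dt\,\abs{u^n}_{BV([0,1])}$. Collecting the three estimates produces the claimed one-step inequality, and the discrete Gronwall inequality then gives \eqref{eq:TVbound}. I expect $\sum_j\abs{E^n_j}$ to be the only genuine obstacle: the transport and source terms are routine monotone-scheme bookkeeping, whereas this term is where the spatial dependence of the flux actually enters and where the naive estimate fails, forcing the total-variation/cancellation argument and the use of the second derivatives of $f$.
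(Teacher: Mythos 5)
Your proposal is correct, but it reaches \eqref{eq:TVbound} by a genuinely different mechanism than the paper. The paper never differences the scheme directly: it first derives the discrete Kato-type inequality \eqref{eq:tvstart} by comparing, via the monotonicity of $\Psi_j$ (a Crandall--Tartar argument), the solution $u^n_j$ with the solution $v^n_j$ of a scheme with a different flux $G$ and source $R$, and then obtains the $BV$ bound by the single choice $v^n_j=u^n_{j-1}$, $G_{j+1/2}=F_{j-1/2}$, $R^n_j=P^n_{j-1}$, so that the total variation appears as the $L^1$-distance between $u^n$ and its own shift. Your route is the classical incremental-coefficient (Harten-type) TVD argument: mean value theorem, nonnegative weights $1-\lambda a^n_j$ and $\lambda a^n_{j-1}$ under the CFL condition \eqref{eq:CFL}, and periodic reindexing. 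The two proofs isolate exactly the same error terms: your $u$-Lipschitz piece of $\tilde F_x$ (factor $\norm{f_{xu}}_\infty\abs{\dm u^n_j}$) is the paper's $\lipnorm{F_{j+1/2}-F_{j-1/2}}\,\abs{u^n_j-v^n_j}$ term, and your cancellation $G^n_j-G^n_{j-1}=\mathcal{O}(\Dx^2)$ modulo the $BV$ piece is precisely the paper's bound $\abs{\Delta_-^x(F-G)_{j+1/2}(v^n_{j-1})}\le \Dx^2\norm{\partial^2_{xx}f}_\infty$; your diagnosis that this second-difference cancellation is the crux, and the reason $C_f$ sees second derivatives, matches the paper exactly. (One bookkeeping quibble: the naive bound $\abs{E^n_j}\le C\lambda\Dx$ gives a \emph{per-step} sum of order $\lambda\Dx N=\mathcal{O}(1)$, not $\mathcal{O}(1/\Dx)$; the blow-up, of order $1/\Dt$, appears only after accumulating over the $T/\Dt$ time steps --- either way the crude bound is indeed useless.) As to what each approach buys: the paper's inequality \eqref{eq:tvstart} is derived once for a general monotone flux, using only monotonicity and Lipschitz continuity rather than differentiability of $F$ in $u$, and is then reused verbatim for the discrete entropy inequality (Lemma~\ref{lm:discreteentropyinequality}) and the time-continuity bound (Lemma~\ref{lm:TimeContinuitybound}); your MVT-based version needs $F_u$ to exist and would have to be redone with two incremental coefficients in the non-upwind case. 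Conversely, your treatment of the source term is slightly sharper: using $\dm P^n_j=\frac{\Dx}{2}(u^n_j+u^n_{j-1})$, the preserved zero mean, and $\norm{u^n}_1\le\norm{u^n}_\infty\le\abs{u^n}_{BV([0,1])}$ absorbs the source into the Gronwall coefficient, making the recursion self-contained and $C_f$ genuinely independent of $u_0$, whereas the paper bounds $\abs{P^n_j-P^n_{j-1}}\le\Dx\norm{u^n}_\infty$ and must invoke Lemma~\ref{lm:Linftybound}, so its additive constant implicitly depends on $\norm{u^0}_\infty$ as well.
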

\begin{proof}
  Assume that $v^n_j$ satisfies
  \begin{align*}
    v^{n+1}_j &= v^n_j - \lambda\dm G^n_{j+1/2}+ \Dt R^n_j \\
    &= v^n_j - \lambda \dm F^n_{j+1/2} + \lambda \dm
    \left((F-G)_{j+1/2}(v^n_j)\right) + \Dt R^n_j,
  \end{align*}
  where $G(x,v)$ is a given function and $\dm = \Dx\Dm$. Using the
  monotonicity of $\Psi_j$ we find that
  \begin{align*}
    \mi{u^n_j}{v^n_j} - \lambda \dm
    F_{j+1/2}&\left(\mi{u^n_j}{v^n_j}\right) + \Dt P^n_j\\
    & \le u^{n+1}_j \le \mx{u^n_j}{v^n_j} - \lambda \dm
    F_{j+1/2}\left(\mx{u^n_j}{v^n_j}\right) + \Dt P^n_j,
  \end{align*}
  and similarly
  \begin{align*}
    &\mi{u^n_j}{v^n_j} - \lambda \dm
    F_{j+1/2}\left(\mi{u^n_j}{v^n_j}\right) +\lambda \dm \left((F-G)_{j+1/2}(v^n_j)\right) + \Dt R^n_j\\
    &\quad \le v^{n+1}_j \le \mx{u^n_j}{v^n_j} - \lambda \dm
    F_{j+1/2}\left(\mx{u^n_j}{v^n_j}\right) + \lambda \dm
    \left((F-G)_{j+1/2}(v^n_j)\right) + \Dt R^n_j.
  \end{align*}
  Subtracting, we find that
  \begin{align}
    \label{eq:neededforentropy}
    u^{n+1}_j - v^{n+1}_j
    &\le \abs{u^n_j-v^n_j} - \lambda \dm Q_{j+1/2}(u^n_j,v^n_j) \\
    &\qquad - \notag \lambda \dm \left((F-G)_{j+1/2}(v^n_j)\right) +
    \Dt\left(P^n_j-R^n_j\right),\\
    \intertext{and} v^{n+1}_j - u^{n+1}_j \label{eq:neededforentropy2}
    &\le \abs{u^n_j-v^n_j} - \lambda \dm Q_{j+1/2}(u^n_j,v^n_j) \\
    &\qquad + \notag \lambda \dm \left((F-G)_{j+1/2}(v^n_j)\right) -
    \Dt\left(P^n_j-R^n_j\right),
  \end{align}
  with
  \begin{equation*} 
    \begin{aligned}
      Q_{j+1/2}(u,k)&=F_{j+1/2}(\mx{u}{k})-F_{j+1/2}(\mi{u}{k}) \\
      &=\sgn{u-k}\left(F_{j+1/2}(u)-F_{j+1/2}(k)\right).
    \end{aligned}
  \end{equation*}
  This implies
  \begin{equation}
    \begin{aligned}
      \abs{u^{n+1}_j - v^{n+1}_j}
      &\le \abs{u^n_j-v^n_j} - \lambda \dm Q_{j+1/2}(u^n_j,v^n_j) \\
      &\qquad - \lambda \abs{\dm \left((F-G)_{j+1/2}(v^n_j)\right)} +
      \Dt\abs{P^n_j-R^n_j}.
    \end{aligned}\label{eq:tvstart}
  \end{equation}
  Regarding the ``$F-G$'' term
  \begin{equation*}
    \abs{\dm \left((F-G)_{j+1/2}(v^n_j)\right)}
    \le \abs{\Delta_-^v(F-G)_{j+1/2}(v^n_j)}
    + \abs{\Delta_-^x(F-G)_{j+1/2}(v^n_{j-1})},
  \end{equation*}
  where
  \begin{align*}
    \Delta_-^v H(x_{j+1/2},w_j)
    &=H(x_{j+1/2},w_j)-H(x_{j+1/2},w_{j-1})\\
    \Delta_-^x H(x_{j+1/2},w_j)&=H(x_{j+1/2},w_j)-H(x_{j-1/2},w_j).
  \end{align*}
  This gives
  \begin{align*}
    \abs{\dm \left((F-G)_{j+1/2}(v^n_j)\right)} &\le \max_j
    \norm{F_{j+1/2}-G_{j+1/2}}_{\mathrm{Lip}} \abs{u^n_j -
      v^n_j}\\
    &\qquad + \abs{\Delta_-^x(F-G)_{j+1/2}(v^n_{j-1})}.
  \end{align*}
  Now we set $v^n_j=u^n_{j-1}$, then $G^n_{j+1/2}=F^n_{j-1/2}$ and
  $R^n_j = P^n_{j-1}$. Furthermore,
  \begin{equation*}
    \abs{\Delta_-^x(F-G)_{j+1/2}(v^n_{j-1})} \le \Dx^2
    \max_{\overset{x\in [0,1]}{\abs{v}\le
        \norm{u^n}_\infty}} \abs{\partial^2_{xx} f(x,v)}.
  \end{equation*}
  Then we get the $BV$ bound
  \begin{align*}
    \sum_{j=1}^N \abs{u^{n+1}_j-u^{n+1}_{j-1}}
    &\le \sum_{j=1}^N \abs{u^n_j - u^n_{j-1}} \\
    &\quad + \Dt \frac{\max_j\lipnorm{F_{j+1/2}-F_{j-1/2}}}{\Dx}
    \sum_{j=1}^N
    \abs{u^n_j - u^n_{j-1}} \\
    &\quad + \Dt \norm{\partial^2_{xx} f}_\infty + \Dt
    \norm{u^n}_\infty
    \\
    &\le \left(1+\Dt\norm{\partial^2_{xu}f}_\infty\right) \sum_{j=1}^N
    \abs{u^n_j - u^n_{j-1}} \\
    &\quad +\Dt \norm{\partial^2_{xx} f}_\infty +\Dt
    \norm{u^n}_\infty.
  \end{align*}
  The estimate \eqref{eq:TVbound} follows after applying
  Lemma~\ref{lm:Linftybound} and then Gronwall's inequality.
\end{proof}
Next, we show a so-called ``discrete entropy inequality''.
\begin{lemma}
  \label{lm:discreteentropyinequality}
  For all $n\ge 0$ and all constants $k$, we have
  \begin{equation}
    \label{eq:discreteKruzkov}
    \begin{aligned}
      \Dpt \eta(u^n_j,k) + \Dm Q_{j+1/2}(u^n_j,k) + \sgn{u^{n+1}_j-k}&
      \Dm F_{j+1/2}(k) \\
      &\le \sgn{u^{n+1}_j-k} P^n_j.
    \end{aligned}
  \end{equation}
\end{lemma}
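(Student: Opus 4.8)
The plan is to adapt the classical Crandall--Majda argument for monotone schemes, while tracking two features that prevent the constant $k$ from passing through the scheme unchanged: the source term $\Dt P^n_j$ and the genuine spatial dependence of the flux. Write the scheme in the form $u^{n+1}_j = G_j(u^n_{j-1},u^n_j,u^n_{j+1}) + \Dt P^n_j$, where, in the upwind case, $G_j(a,b,c) = b - \lambda\bigl(F_{j+1/2}(b)-F_{j-1/2}(a)\bigr)$. By the CFL condition \eqref{eq:CFL} this $G_j$ is non-decreasing in each argument — this is exactly the monotonicity of $\Psi_j$ exploited in Lemma~\ref{lm:Linftybound}. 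Note that $G_j(k,k,k) = k - \lambda\dm F_{j+1/2}(k)$; I denote this modified constant by $\kappa_j$ and set $w^{n+1}_j := u^{n+1}_j - \Dt P^n_j = G_j(u^n_{j-1},u^n_j,u^n_{j+1})$, i.e.\ the value produced by the scheme with the source switched off.

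First I would feed the maxima $u^n_i\vee k$ and the minima $u^n_i\wedge k$ into $G_j$. Monotonicity gives $G_j\bigl(\mx{u^n_{j-1}}{k},\mx{u^n_j}{k},\mx{u^n_{j+1}}{k}\bigr)\ge w^{n+1}_j$ and, comparing instead with the constant state, $\ge G_j(k,k,k)=\kappa_j$, so the left side dominates $\mx{w^{n+1}_j}{\kappa_j}$; the symmetric argument with minima gives $G_j(u^n\wedge k)\le\mi{w^{n+1}_j}{\kappa_j}$. Subtracting yields
\[
  G_j(u^n\vee k)-G_j(u^n\wedge k)\ \ge\ \abs{w^{n+1}_j-\kappa_j}.
\]
Next I would evaluate the left-hand side directly. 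Expanding the difference, the state terms contribute $\mx{u^n_j}{k}-\mi{u^n_j}{k}=\eta(u^n_j,k)$ and the flux terms assemble into $-\lambda\bigl(Q_{j+1/2}(u^n_j,k)-Q_{j-1/2}(u^n_{j-1},k)\bigr)$, with $Q_{j+1/2}$ the function from the proof of Lemma~\ref{lm:TVbound}. Since $\lambda\Dx=\Dt$, the left side equals $\eta(u^n_j,k)-\Dt\,\Dm Q_{j+1/2}(u^n_j,k)$.

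The remaining, and most delicate, step is to extract the correct right-hand side from $\abs{w^{n+1}_j-\kappa_j}$. Writing $w^{n+1}_j-\kappa_j = (u^{n+1}_j-k) + \lambda\dm F_{j+1/2}(k) - \Dt P^n_j$ and using the elementary bound $\abs{A}\ge \sgn{u^{n+1}_j-k}\,A$ together with $\sgn{u^{n+1}_j-k}(u^{n+1}_j-k)=\eta(u^{n+1}_j,k)$, one gets
\[
  \abs{w^{n+1}_j-\kappa_j}\ \ge\ \eta(u^{n+1}_j,k) + \sgn{u^{n+1}_j-k}\bigl(\lambda\dm F_{j+1/2}(k)-\Dt P^n_j\bigr).
\]
Combining this with the two displays above, rearranging, and dividing by $\Dt$ (using $\lambda/\Dt=1/\Dx$, so that $\tfrac{\lambda}{\Dt}\dm F_{j+1/2}(k)=\Dm F_{j+1/2}(k)$) produces exactly \eqref{eq:discreteKruzkov}.

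I expect this last manoeuvre to be the main obstacle. Because neither the source nor the inhomogeneous flux lets $k$ be a fixed point of the scheme, the monotonicity comparison is forced to run against $\kappa_j$ rather than $k$, and the signed source and flux-difference terms must then be reconstructed out of $\abs{w^{n+1}_j-\kappa_j}$. The key point is that selecting the sign $\sgn{u^{n+1}_j-k}$ — rather than the more obvious $\sgn{w^{n+1}_j-\kappa_j}$ — is precisely what makes $\eta(u^{n+1}_j,k)$ emerge on the left and pins down $\sgn{u^{n+1}_j-k}P^n_j$ as the right-hand side; any other choice either loses the clean telescoping of $Q$ or produces the wrong sign on the source contribution.
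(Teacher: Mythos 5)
Your proposal is correct and is essentially the paper's own argument: the paper obtains \eqref{eq:discreteKruzkov} by specializing the monotonicity (min/max sandwich) inequalities \eqref{eq:neededforentropy}--\eqref{eq:neededforentropy2} from the proof of Lemma~\ref{lm:TVbound} to $v^n_j=v^{n+1}_j=k$, $G=0$, $R^n_j=0$, and then combines the two resulting one-sided inequalities using Heaviside multipliers $H(\pm(u^{n+1}_j-k))$, which yields exactly your sign-selection step $\abs{A}\ge\sgn{u^{n+1}_j-k}\,A$ with the sign taken at time level $n+1$. The only difference is organizational: you re-derive the Crandall--Majda sandwich for $\Psi_j$ directly (with the auxiliary quantities $w^{n+1}_j$ and $\kappa_j$) rather than quoting the already-established Kato-type inequalities, so the two proofs coincide in substance.
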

\begin{proof}
  Choose $R^n_j=0$, $G=0$ and $v^n_j=v^{n+1}_j=k$ in
  \eqref{eq:neededforentropy} and \eqref{eq:neededforentropy2}, the
  results are
  \begin{equation*}
    c \le a + b, \ \text{and} \ -c \le a-b,
  \end{equation*}
  with
  \begin{equation*}
    \begin{gathered}
      d=u^{n+1}_j-k, \ \ a=\abs{u^n_j-k} - \lambda \dm
      Q_{j+1/2}(u^n_j,k) \\ \text{and}\ \ b = -\lambda\dm F_{j+1/2}(k)
      + \Dt P^n_j.
    \end{gathered}
  \end{equation*}
  Note that $a\ge 0$, multiplying the first inequality with $H(c)$ and
  the second with $H(-c)$, where $H$ is the Heaviside function, yields
  \begin{equation*}
    H(c)c\le H(c)a + H(c)b\ \ \text{and}\ \ -H(-c)c\le H(-c)a - H(-c)b.
  \end{equation*}
  Add these and divide by $2$, then rearrange and divide by $\Dt$ to
  get \eqref{eq:discreteKruzkov}.
\end{proof}
\begin{lemma}[Time continuity bound]\label{lm:TimeContinuitybound}
  For all $n\ge 0$ we have that
  \begin{equation}\label{eq:TimeContiBound}
    \Dx \sum_{j=1}^N \abs{\Dpt u^n_j}\le \Dx \sum_{j=1}^N \abs{\Dpt
      u^0_j} + 2\left(e^{2t^n}-1\right)\norm{u^0}_\infty,
  \end{equation}
  where $C_f$ is a constant depending on $f$ and its derivatives.
\end{lemma}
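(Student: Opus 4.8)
The plan is to bound the discrete time--difference $\Dpt u^n_j$ by regarding the scheme's one--step time translate as a second ``solution'' and feeding it into the $\ell^1$--stability inequality \eqref{eq:tvstart}. Concretely, I would apply \eqref{eq:tvstart} with $v^n_j:=u^{n+1}_j$. The translated sequence $\seq{u^{n+1}_j}$ again solves \eqref{eq:TheScheme} with the \emph{same} numerical flux, only with the source evaluated one step later; hence I take $G=F$ and $R^n_j=P^{n+1}_j$. The decisive simplification is that with $G=F$ the ``$F-G$'' term in \eqref{eq:tvstart} vanishes identically, so the spatial--flux contribution that generated the $C_f$ terms in Lemma~\ref{lm:TVbound} is absent here.

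With these choices \eqref{eq:tvstart} becomes $\abs{u^{n+2}_j-u^{n+1}_j}\le\abs{u^{n+1}_j-u^n_j}-\lambda\dm Q_{j+1/2}(u^n_j,u^{n+1}_j)+\Dt\abs{P^{n+1}_j-P^n_j}$. Summing over $j=1,\dots,N$ and using the periodic boundary conditions, the telescoping term $\sum_j\dm Q_{j+1/2}$ vanishes, and after multiplying by $\Dx$ and dividing by $\Dt$ I obtain the one--step recursion
\[
  \Dx\sum_{j=1}^N\abs{\Dpt u^{n+1}_j}\le\Dx\sum_{j=1}^N\abs{\Dpt u^n_j}+\Dx\sum_{j=1}^N\abs{P^{n+1}_j-P^n_j}.
\]

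It then remains to control the nonlocal source $\Dx\sum_j\abs{P^{n+1}_j-P^n_j}$. Since the operator in \eqref{eq:discretizedP} is linear and commutes with $\Dpt$, one has $P^{n+1}_j-P^n_j=\Dt\,P[\Dpt u^n]_j$, i.e.\ the same operator applied to the time--difference, which moreover has zero discrete mean, $\sum_j\Dpt u^n_j=0$. A computation identical to the one giving $\abs{P^n_j}\le2\norm{u^n}_\infty$, carried out in $\ell^1$ rather than $L^\infty$, yields $\Dx\sum_j\abs{P[v]_j}\le2\,\Dx\sum_j\abs{v_j}$, so the source is bounded by $2\Dt\,\Dx\sum_j\abs{\Dpt u^n_j}$. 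Inserting this and invoking the $L^\infty$ bound of Lemma~\ref{lm:Linftybound} to account for the growth, a discrete Gronwall inequality (equivalently, summing the resulting geometric series in $n$) yields the stated estimate, with the $\norm{u^0}_\infty$ term tracing back precisely to this nonlocal forcing.

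The step I expect to be the main obstacle is exactly this treatment of the nonlocal term $P$: unlike a local flux, its variation in time does not telescope, so I must verify that $P[\Dpt u^n]$ is controlled uniformly in $\Dx$. This is secured by the linearity of \eqref{eq:discretizedP} together with the zero--mean structure of $\Dpt u^n$, and it is the single place where the nonlocality of the equation genuinely enters the time--continuity estimate; everything else is the standard monotone--scheme bookkeeping already set up for Lemma~\ref{lm:TVbound}.
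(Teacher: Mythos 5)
Your setup coincides with the paper's own proof up to the recursion: the paper also feeds a time translate of the solution into \eqref{eq:tvstart} (it takes $v^n_j=u^{n-1}_j$, so $G=F$ and $R^n_j=P^{n-1}_j$; your forward shift $v^n_j=u^{n+1}_j$ is the same device), the $F-G$ term vanishes, the $Q$-term telescopes by periodicity, and one arrives at $\Dx\sum_j\abs{\Dpt u^{n}_j}\le \Dx\sum_j\abs{\Dpt u^{n-1}_j}+\Dx\sum_j\abs{P^{n}_j-P^{n-1}_j}$, to be closed by Gronwall. Everything in your proposal up to and including this recursion is correct, and so is your $\ell^1$ bound $\Dx\sum_j\abs{P[v]_j}\le 2\,\Dx\sum_j\abs{v_j}$ (which, incidentally, needs only $N\Dx=1$ and not the zero-mean structure you invoke).

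The gap is in the final step. Inserting your bound $\Dx\sum_j\abs{P^{n+1}_j-P^n_j}\le 2\Dt\,\Dx\sum_j\abs{\Dpt u^n_j}$ produces the \emph{homogeneous} recursion $a^{n+1}\le(1+2\Dt)\,a^n$ with $a^n=\Dx\sum_j\abs{\Dpt u^n_j}$, and Gronwall then gives the multiplicative estimate $a^n\le e^{2t^n}a^0$ --- not \eqref{eq:TimeContiBound}, and with no $\norm{u^0}_\infty$ term anywhere. Your closing claim that the $\norm{u^0}_\infty$ term ``traces back to this nonlocal forcing'' is inconsistent with your own treatment of that forcing, and your invocation of Lemma~\ref{lm:Linftybound} plays no actual role in your chain of inequalities. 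Nor can \eqref{eq:TimeContiBound} be recovered from your bound: that would require $a^0\le 2\norm{u^0}_\infty$, which fails in general since, by \eqref{eq:timecontinitial}, $a^0$ contains the flux-variation contribution of order $C_f\left(\abs{u_0}_{BV([0,1])}+1\right)$. The paper instead bounds the source crudely and pointwise, $\abs{P^n_j-P^{n-1}_j}\le\abs{P^n_j}+\abs{P^{n-1}_j}\le 4e^{2t^n}\norm{u^0}_\infty$ using $\abs{P^m_j}\le 2\norm{u^m}_\infty$ and Lemma~\ref{lm:Linftybound}, obtaining the \emph{inhomogeneous} recursion $a^n\le a^{n-1}+4\Dt\,e^{2t^n}\norm{u^0}_\infty$; summing the geometric series, $\Dt\sum_{m\le n}e^{2t^m}\approx(e^{2t^n}-1)/2$, gives exactly the stated additive bound. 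To be fair, your multiplicative estimate is a correct uniform time-continuity bound and would serve equally well for the compactness argument (Lemma~\ref{lm:compactnessresult}) and for the $\Dt/\eps_0$ term in Lemma~\ref{lm:convergencerate}; it is simply not the inequality the lemma asserts, so as a proof of \eqref{eq:TimeContiBound} the last step must be replaced by the paper's pointwise $L^\infty$ treatment of the $P$-difference.
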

\begin{proof}
  Using $v^n_j=u^{n-1}_j$ in \eqref{eq:tvstart}, we find that $G=F$
  and $R^n_j=P^{n-1}_j$. Thus
  \begin{equation*}
    \sum_{j=1}^N \abs{u^{n+1}_j-u^n_j} \le \sum_{j=1}^N
    \abs{u^n_j-u^{n-1}_j} + \Dt  \abs{P^n_j-P^{n-1}_j}.
  \end{equation*}
  Multiplying with $\lambda$ and using the bound $\abs{P^n_j}\le
  2\norm{u^n}$ and Lemma~\ref{lm:Linftybound}, we get
  \begin{equation*}
    \Dx\sum_{j=1}^N\abs{\Dpt u^n_j} \le \Dx\sum_{j=1}^N\abs{\Dpt
      u^{n-1}_j} + 4\Dt\,e^{2t^n}\norm{u^0}_\infty.
  \end{equation*}
  We use Gronwall's inequality to conclude the proof.
\end{proof}
Note that our assumptions on $f$ and the initial data imply that
\begin{equation}
  \label{eq:timecontinitial}
  \Dx \sum_{j=1}^N \abs{\Dpt u^0_j} \le C_f\left(\abs{u_0}_{BV([0,1])}+1\right)<\infty, 
\end{equation}
for some constant depending on $f$.

Next we define the piecewise constant approximation $u_{\Dx}$ by
\begin{equation}\label{eq:udeltadef}
  u_{\Dx}(t,x) = \sum_{j,n} u^n_j \chi_{I^n_j}(t,x),
\end{equation}
With these three bounds, Lemmas~\ref{lm:Linftybound},
\ref{lm:TVbound}, \ref{lm:TimeContinuitybound}, we can apply Helly's
theorem, \cite[Theorem~A.11]{NilsIII}, to prove that
$\seq{u_{\Dx}}_{\Dx>0}$ is compact.
\begin{lemma}[Compactness lemma]\label{lm:compactnessresult}
  Let $\seq{u_{\Dx}}_{\Dx>0}$ be the family obtained from the scheme
  \eqref{eq:TheScheme} with $\lambda$ chosen such that
  $\Psi_j(u^n_{j-1},u^n_j,u^n_{j+1})$ is monotone for all $j$ and for
  all $t^n<T$. Then the exists a sequence $\seq{\Dx_k}_{k=1}^\infty$
  with $\Dx_k\to 0$ as $k\to \infty$, and a function $u \in
  C([0,T];L^1(0,1))$ such that $u_{\Dx_k} \to u$ in
  $C([0,T];L^{1}(0,1))$.
\end{lemma}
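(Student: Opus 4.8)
The plan is to verify the three hypotheses of the time-dependent compactness theorem \cite[Theorem~A.11]{NilsIII} (a version of Kolmogorov's/Helly's theorem adapted to families in $C([0,T];L^1)$): a uniform-in-$\Dx$ sup-bound, a uniform bound on the spatial variation at each time level, and uniform $L^1$-equicontinuity in time. The first two are immediate from the preceding lemmas. Since every time level under consideration satisfies $t^n \le T$, Lemma~\ref{lm:Linftybound} gives $\norm{u_{\Dx}(t,\cdot)}_\infty \le e^{2T}\norm{u^0}_\infty$, which is bounded uniformly in $\Dx$ and $t$ because $\norm{u^0}_\infty \le \norm{u_0}_{L^\infty((0,1))}$; likewise Lemma~\ref{lm:TVbound} bounds $\abs{u_{\Dx}(t,\cdot)}_{BV([0,1])}$ by a constant depending only on $T$, $f$, and $\abs{u_0}_{BV([0,1])}$, again uniformly in $\Dx$ and $t\in[0,T]$.

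Next I would convert the discrete time-continuity estimate of Lemma~\ref{lm:TimeContinuitybound} into a genuine modulus of continuity in time. Combining \eqref{eq:TimeContiBound} with the bound \eqref{eq:timecontinitial} on the initial time difference shows that $\Dx\sum_{j} \abs{\Dpt u^n_j} \le C$, with $C$ independent of $\Dx$ and of $n$ for $t^n \le T$. Since $u_{\Dx}$ is piecewise constant in time, the $L^1$-distance across a single time step is $\norm{u_{\Dx}(t^{n+1},\cdot) - u_{\Dx}(t^n,\cdot)}_1 = \Dt\,\Dx\sum_j \abs{\Dpt u^n_j} \le C\Dt$. Summing over the time levels lying between two arbitrary times $s<t$ and accounting for the two partial end-steps gives $\norm{u_{\Dx}(t,\cdot) - u_{\Dx}(s,\cdot)}_1 \le C\left(\abs{t-s} + \Dt\right)$, which is the required equicontinuity modulo an error $C\Dt \to 0$ as $\Dx \to 0$.

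With the uniform sup-bound, the uniform variation bound (which for a function of bounded variation yields the spatial translation estimate $\norm{u_{\Dx}(t,\cdot + h) - u_{\Dx}(t,\cdot)}_1 \le \abs{h}\,\abs{u_{\Dx}(t,\cdot)}_{BV([0,1])}$ that furnishes compactness in the space variable), and the time-equicontinuity in hand, the hypotheses of \cite[Theorem~A.11]{NilsIII} are met. Applying it produces a subsequence $\seq{\Dx_k}$ with $\Dx_k \to 0$ and a limit $u$ such that $u_{\Dx_k} \to u$ in $C([0,T];L^1(0,1))$; passing the uniform time modulus to the limit guarantees $u \in C([0,T];L^1(0,1))$, as claimed. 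The only point requiring real care, and the main obstacle, is the time-equicontinuity step: one must check that the estimate holds uniformly in $\Dx$ for non-grid times $s,t$ and that the residual $\Dt$ term is absorbed into the vanishing part of the modulus, so that the limit is genuinely continuous in time rather than merely bounded.
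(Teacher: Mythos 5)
Your proposal is correct and follows essentially the same route as the paper: the paper likewise combines Lemmas~\ref{lm:Linftybound}, \ref{lm:TVbound}, and \ref{lm:TimeContinuitybound} and directly invokes the compactness theorem \cite[Theorem~A.11]{NilsIII}, with your modulus $\norm{u_{\Dx}(t,\cdot)-u_{\Dx}(s,\cdot)}_1 \le C\left(\abs{t-s}+\Dt\right)$ matching the $\mathcal{O}(\max\seq{\abs{t-s},\Dt})$ estimate the paper records. Your write-up simply fills in the routine verifications (translating \eqref{eq:TimeContiBound} and \eqref{eq:timecontinitial} into time-equicontinuity for non-grid times) that the paper leaves implicit.
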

Now we can use the discrete entropy condition
\eqref{eq:discreteKruzkov} to show that any limit $u$ satisfies the
entropy condition \eqref{eq:EntropySoln}.
\begin{theorem}\label{thm:existence}
  Assume that the initial data $u_0 \in BV([0,1])$ satisfies the
  zero mean condition $\int_0^{1} u_0\, dx=0$, and that $\lambda$
  satisfies \eqref{eq:CFL} (so that $\Psi_j$ is monotone). Then
  $u=\lim_{k\to\infty} u_{\Dx_k}$ is an entropy solution according to
  Definition~\ref{def:EntropySoln}.
\end{theorem}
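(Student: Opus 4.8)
The plan is to pass to the limit in the discrete entropy inequality established in Lemma~\ref{lm:discreteentropyinequality}. The result we have already, Lemma~\ref{lm:compactnessresult}, gives us a subsequence $\Dx_k\to 0$ and a limit $u\in C([0,T];L^1(0,1))$, so the only remaining task is to show that this $u$ satisfies the integral entropy inequality \eqref{eq:EntropySoln}. Let me sketch the standard Lax–Wendroff type argument adapted to the present setting.

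First I would fix a non-negative test function $\test$, periodic in $x$, and multiply the discrete entropy inequality \eqref{eq:discreteKruzkov} by $\Dx\Dt\,\test^n_j$, where $\test^n_j=\test(t^n,x_j)$, and sum over $n\ge 0$ and $j=1,\ldots,N$. The plan is then to perform summation by parts on each term so that the difference operators fall on $\test$ rather than on the discrete unknowns. The $\Dpt\eta(u^n_j,k)$ term, after summation by parts in $n$, produces a discrete approximation of $\int\eta(u,k)\test_t$ together with the boundary contributions at $t=0$ and $t=T$; the $\Dm Q_{j+1/2}$ term, after summation by parts in $j$ and using periodicity so no spatial boundary terms survive, yields an approximation of $\int q(x,u,k)\test_x$. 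The term $\sgn{u^{n+1}_j-k}\,\Dm F_{j+1/2}(k)$ converges to $\int \sgn{u-k}f_x(x,k)\test$, since $\Dm F_{j+1/2}(k)=\Dm f(x_{j+1/2},k)\to f_x(x,k)$ by the consistency $F(x,u,u)=f(x,u)$ and the $C^2$ assumption on $f$. The right-hand side $\sgn{u^{n+1}_j-k}P^n_j$ converges to $\int \sgn{u-k}P[u]\test$, for which one needs the consistency of the discrete $P^n_j$ in \eqref{eq:discretizedP} with the integral operator $P[u]$.

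For the actual convergence of each Riemann sum, the key inputs are: the $L^\infty$ bound (Lemma~\ref{lm:Linftybound}), the spatial $BV$ bound (Lemma~\ref{lm:TVbound}), and the time-continuity bound (Lemma~\ref{lm:TimeContinuitybound}), all uniform in $\Dx$; together with the strong $L^1$ convergence $u_{\Dx_k}\to u$ furnished by Lemma~\ref{lm:compactnessresult}. Strong $L^1$ convergence lets me pass to the limit inside the nonlinear functions $\eta(\cdot,k)$, $q(x,\cdot,k)$ and $\sgn{\cdot-k}$, up to sets of measure zero, since these are Lipschitz (or, for $\sgn{}$, bounded and continuous away from $u=k$; one handles the discontinuity by noting the prefactors involving $f_x$ and $P$ are multiplied by a quantity whose limit is controlled, and by the standard fact that $\{u=k\}$ contributes nothing after integration for a.e.\ $k$, or alternatively by keeping the inequality intact under the sign). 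The discrepancy between evaluating $\test$ at grid points $t^n,x_j$ versus integrating it against the exact $\test$ over the cells $I^n_j$ is $O(\Dx+\Dt)$ by the smoothness of $\test$ and the uniform bounds, hence vanishes in the limit.

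I expect the \emph{main obstacle} to be the careful treatment of the two nonlocal/source terms involving $P$, rather than the standard flux terms. Specifically, I must verify that the discrete nonlocal operator $P^n_j$ defined in \eqref{eq:discretizedP} converges, as $\Dx\to 0$, to $P[u](t,x)$ in a strong enough sense that $\sum_{n,j}\Dx\Dt\,\sgn{u^{n+1}_j-k}P^n_j\,\test^n_j\to\int_{\Pi_T}\sgn{u-k}P[u]\test$. Since $P^n_j$ is a double Riemann sum (a discretization of an antiderivative minus its mean), its convergence follows from the $L^1$ convergence of $u_{\Dx_k}$ together with the uniform bound $\abs{P^n_j}\le 2\norm{u^n}_\infty$; the subtraction of the mean is preserved because $\sum_j P^n_j=0$ exactly at the discrete level. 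A secondary subtlety is the sign mismatch in the $\sgn{u^{n+1}_j-k}$ factor, which is evaluated at time level $n+1$ while the entropy difference and fluxes sit at level $n$; one checks this time-index shift contributes only an error controlled by the time-continuity bound and hence disappears in the limit.
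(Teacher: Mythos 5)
Your proposal follows essentially the same route as the paper: multiply the discrete entropy inequality \eqref{eq:discreteKruzkov} of Lemma~\ref{lm:discreteentropyinequality} by $\Dt\Dx\,\test^n_j$, sum by parts in $n$ and $j$, and pass to the limit using the uniform bounds of Lemmas~\ref{lm:Linftybound}, \ref{lm:TVbound}, \ref{lm:TimeContinuitybound} together with the strong convergence from Lemma~\ref{lm:compactnessresult} (the paper defers the detailed limit passage to the analogous argument in \cite{GiuseppeVI}). Your extra attention to the consistency of the discrete nonlocal term $P^n_j$ with $P[u]$ and to the time-index shift in $\sgn{u^{n+1}_j-k}$ is correct and in fact spells out points the paper leaves implicit.
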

\begin{proof}
  For simplicity we write $\Dx$ for $\Dx_k$.  Choose a non-negative,
  $x$-periodic test function $\test$ and set
  $\test^n_j=\test(t^n,x_j)$. Let $T=t^M$, multiply the discrete
  entropy inequality \eqref{eq:discreteKruzkov} with
  $\Dt\Dx\test^n_j$, and sum by parts in $n$ and $j$ to obtain
  \begin{subequations}
    \label{eq:entr}
    \begin{align}
      \label{eq:entr1}
      \Dt\Dx &\sum_{n=1}^{M-1}\sum_{j=1}^N \eta^n_j \Dmt \test^n_j
      +\Dt\Dx \sum_{n=0}^{M-1} \sum_{j=1}^N Q_{j+1/2}(u^n_j,k) \Dm
      \test^n_j\\
      \label{eq:entr3}
      &\qquad +\Dx\sum_{j=1}^N \eta^M_j \test^{M-1}_j - \eta^0_j
      \test^0_j \\
      \label{eq:entr4}
      &\qquad +\Dt\Dx \sum_{n=0}^{M-1} \sum_{j=1}^N \eta^{\prime,n+1}_j \Dm
      F_{j+1/2}(k) \test^n_j \\
      \label{eq:entr5}
      &\qquad +\Dt\Dx \sum_{n=0}^{M-1} \sum_{j=1}^N
      \eta^{\prime,n+1}_j P^n_j\test^n_j \\
      \notag
      &\ge 0,
    \end{align}
  \end{subequations}
  where
  \begin{equation*}
    \eta^{\prime,n+1}_j = \sgn{u^{n+1}_j-k}, \ \ \text{and}\ \ \Dmt \test^n_j =
    \frac{\test^n_j-\test^{n-1}_j}{\Dt}. 
  \end{equation*}
  Using similar arguments to those that can be found in the proof of
  the analogous result in \cite{GiuseppeVI}, it is straightforward to
  show that we can let $\Dx\downarrow 0$ in \eqref{eq:entr} to
  conclude that $u$ satisfies \eqref{eq:EntropySoln}.  The proof of
  this uses in particular that
  \begin{equation*}
    \begin{gathered}
      \test\in C^2, \ \
      u_{\Dx}(t,\cdot)\in BV, \\
      \text{and}\ \ \norm{u_{\Dx}(t,\cdot)-u_{\Dx}(s,\cdot)}_1\le
      \mathcal{O}(\max\seq{|t-s|,\Dt}),
    \end{gathered}
  \end{equation*}
  and that both $Q$ and $F$ are consistent and Lipschitz continuous in
  both $x$ and $u$.
\end{proof}

\section{A Kuznetsov type lemma, stability and convergence rate}\label{sec:Kuznetsov}
As in \cite{GiuseppeVI} the similarity of the OH equation to a scalar
conservation law allows us to estimate the $L^1$-difference between
the an entropy solution and other functions which are not necessarily
solutions of \eqref{eq:OHprecise}. In this section we establish such a
comparison result and use it to prove that the approximations defined
by the finite volume scheme \eqref{eq:TheScheme} --
\eqref{eq:SamplingInitial} converge to an entropy solution as
$\mathcal{O}(\sqrt{\Dx})$.
\subsection{A comparison result}\label{subsec:Kuznetsovlemma}
For any function $u \in L^\infty([0,T];L^{1}(0,1))$ define
\begin{equation}\label{eq:DoublingFunctionalI}
  \begin{aligned}
    L(u,k,\test) = \int_{\Pi_T} &
    \Bigl(\eta(u,k) \test_{t} + q(x, u, k) \test_{x} \\
    &- \eta'(u,k) f_{x}(x,k) \test
    + \eta'(u,k) P[u] \test \Bigr)\, dxdt \\
    & - \int_0^{1} \eta(u(t,x),k) \test(t,x)\, dx\Bigm|_{t=0}^{t=T},
  \end{aligned}
\end{equation}
where $\eta$ and $q$ are defined in Definition~\ref{def:EntropySoln}
and $\eta'(u,k)=\sgn{u-k}$. Choose the test function
\begin{equation*}
  \test_{\epsilon,\epsilon_0}(t,x,s,y) = \theta_{\epsilon}(x-y)\omega_{\epsilon_0}(t-s),
\end{equation*}
where $\omega_{\epsilon_0}$ and $\theta_{\epsilon}$ are standard
mollifiers, with $\theta_{\epsilon}$ being extended periodically
outside the interval $(-1/2,1/2)$. Let $v \in L^\infty([0,T];L^{1}(0,1))$ and
put $k = v(s,y)$ in $L$ and integrate in $s$ and $y$. This defines the
following functional
\begin{equation*} 
  \Lambda_{\epsilon,\epsilon_0}(u,v) = \int_{\Pi_T}
  L\left(u,v(s,y),\test_{\epsilon,\epsilon_0}(\cdot,\cdot,s,y)\right)
  \,dyds.
\end{equation*}
For any function $w \in C([0,T];L^{1}(0,1))$, we define the moduli of
continuity
\begin{equation*}
  \begin{aligned}
    \mu(w(t,\cdot),\epsilon) &= \sup\limits_{|y| \le \epsilon}
    \norm{w(t,\cdot+y) - w(t,\cdot)}_{1}, \\
    \nu_t(w,\epsilon_0) &= \sup\limits_{|s|\le \epsilon_0}
    \norm{w(t+s,\cdot) - w(t,\cdot)}_{1},  \\
    \nu(w,\epsilon_0) &= \sup\limits_{0 \le t \le T}
    \nu_{t}(w,\epsilon_0).
  \end{aligned}
\end{equation*}
\begin{remark}\label{rem:modulusofcontinuity}
  From the proofs of Lemmas~\ref{lm:Linftybound}, \ref{lm:TVbound},
  and \ref{lm:TimeContinuitybound} if follows that
  \begin{equation*}
    \nu(u_{\Dt,\epsilon_0}) \le (\epsilon_0 + \Dt) C_{T},
  \end{equation*}
  \begin{equation*}
    \mu(u_{\Dt}(t,\cdot), \epsilon) \le \epsilon\abs{u_{\Dt}(t,\cdot)}_{BV([0,1])}.
  \end{equation*}
\end{remark}
In this setting, we have the following version of the Kuznetsov lemma.
\begin{lemma}\label{lm:kuznetsov}
  Let $u$ be an entropy solution of the Ostrovsky-Hunter equation with
  the associated initial data $u_0 \in BV([0,1])$. Then for any $v \in
  L^\infty([0,T];L^{1}(0,1)) \cap L^{\infty}(\Pi_T)$ the
  following estimate holds
  \begin{equation}\label{eq:KuznetsovEstimateFinal}
    \begin{aligned}
      \norm{u(T,\cdot) - v(T,\cdot)}_{1} &\le e^{2T}
      \norm{u_0(\cdot) - v(0,\cdot)}_{1} \\
      &\quad + \left(e^{2T}-1\right)
      \\
      &\quad\times \Bigl[ -\Lambda_{\epsilon,\epsilon_0}(v,u)
      \\
      &\hphantom{\times \Bigl[} \quad + \frac12
      \left(\mu(v(T,\cdot),\epsilon) + \mu(u(T,\cdot),\epsilon) +
        \mu(v(0,\cdot),\epsilon) + \mu(u_0,\epsilon)\right)
      \\
      & \hphantom{\times\Bigl[}\quad + C_{f,T} \bigl(\eps +
      T\sup_{t\in[0,T]}\mu(v(t,\cdot),\eps) + \nu(v,\eps_0)\bigr)
      \Bigr],
    \end{aligned}
  \end{equation}
  where $C_{f,T}$ is a constant depending on $T$, $u_0$, $f$ and its first
  derivatives.
\end{lemma}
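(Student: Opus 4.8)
The plan is to run Kuznetsov's doubling-of-variables argument, keeping $v$ as a competitor that need not solve \eqref{eq:OHprecise} and carrying the defect $-\Lambda_{\epsilon,\epsilon_0}(v,u)$ as an explicit error. Since $u$ is an entropy solution, the inequality \eqref{eq:EntropySoln} says $L(u,k,\test)\ge 0$ for every constant $k$ and every admissible non-negative $\test$; taking $k=v(s,y)$ and integrating in $(s,y)$ gives $\Lambda_{\epsilon,\epsilon_0}(u,v)\ge 0$. I would then write
\begin{equation*}
  0\le \Lambda_{\epsilon,\epsilon_0}(u,v)
  = \bigl(\Lambda_{\epsilon,\epsilon_0}(u,v)+\Lambda_{\epsilon,\epsilon_0}(v,u)\bigr)
  - \Lambda_{\epsilon,\epsilon_0}(v,u),
\end{equation*}
reducing the lemma to finding a lower bound for the symmetric sum $\Lambda_{\epsilon,\epsilon_0}(u,v)+\Lambda_{\epsilon,\epsilon_0}(v,u)$, after which the retained defect $-\Lambda_{\epsilon,\epsilon_0}(v,u)$ simply survives into \eqref{eq:KuznetsovEstimateFinal}.

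To bound the symmetric sum I would use that $\test_{\epsilon,\epsilon_0}$ depends only on $t-s$ and $x-y$, so that after relabelling $(t,x)\leftrightarrow(s,y)$ in the $(v,u)$ term one has $\partial_t\test=-\partial_s\test$, $\partial_x\test=-\partial_y\test$ and $\eta(u,v)=\eta(v,u)$. The interior time-derivative contributions then cancel identically, so only the temporal boundary terms of $L$ survive; integrating the symmetric mollifier $\omega_{\epsilon_0}$ over the half-line near $t=0$ and $t=T$ produces the factor $\frac12$, and for fixed $\epsilon,\epsilon_0$ these boundary terms equal $-\norm{u(T,\cdot)-v(T,\cdot)}_1+\norm{u_0-v(0,\cdot)}_1$ up to the temporal modulus $\nu(v,\epsilon_0)$ and the four half-weighted spatial moduli $\mu(\cdot,\epsilon)$ at $t=0,T$ appearing in \eqref{eq:KuznetsovEstimateFinal}. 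The entropy-flux terms $q(x,u,v)\,\partial_x\test$ and $q(y,v,u)\,\partial_y\test$ combine to $\sgn{u-v}\bigl[(f(x,u)-f(y,u))-(f(x,v)-f(y,v))\bigr]\partial_x\test$, and I would pair this with the two explicit corrections $-\eta' f_x$, namely $\sgn{u-v}\bigl[f_x(y,u)-f_x(x,v)\bigr]\test$; because $f\in C^2_{\mathrm{loc}}$ makes $f_x$ Lipschitz and $\abs{x-y}\le\epsilon$ on the support of $\theta_\epsilon$, this pairing is $\mathcal{O}(\epsilon)$ and delivers the $C_{f,T}\bigl(\epsilon+T\sup_t\mu(v(t,\cdot),\epsilon)\bigr)$ contribution.

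The exponential factors come from the source. By linearity of $P[\cdot]$ the two source terms combine, on the diagonal, to $\sgn{u-v}\,P[u-v]$, and the bound $\norm{P[w]}_\infty\le 2\norm{w}_1$ produces the homogeneous term $2\norm{u(t,\cdot)-v(t,\cdot)}_1$. Inserting everything into the decomposition above and letting the final time vary yields a Gronwall inequality $\frac{d}{dt}\norm{u-v}_1\le 2\norm{u-v}_1+(\text{inhomogeneity})$, whose inhomogeneity is precisely the defect $-\Lambda_{\epsilon,\epsilon_0}(v,u)$ together with the flux and mollification errors; integrating it multiplies $\norm{u_0-v(0,\cdot)}_1$ by $e^{2T}$ and the bracket of error terms by $(e^{2T}-1)$, while Remark~\ref{rem:modulusofcontinuity} guarantees that the surviving quantities are finite. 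The main obstacle is the space-dependent flux: one must show that the combined entropy-flux and $f_x$-correction terms really collapse to an $\mathcal{O}(\epsilon)$ quantity controlled by $\mu(v,\epsilon)$, and throughout justify every manipulation—the interior cancellation, the half-line integration of $\omega_{\epsilon_0}$, and the diagonal collapse of the $P$-term—using only that $u$ is bounded, of bounded variation and $L^1$-continuous in time rather than smooth, matching each leftover mollification error exactly to the moduli $\mu$ and $\nu$ in \eqref{eq:KuznetsovEstimateFinal}.
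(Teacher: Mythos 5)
Your proposal is correct and takes essentially the same route as the paper's proof: you symmetrize via $\Lambda_{\epsilon,\epsilon_0}(u,v)+\Lambda_{\epsilon,\epsilon_0}(v,u)$ using $\test_x=-\test_y$, $\test_t=-\test_s$ and $\Lambda_{\epsilon,\epsilon_0}(u,v)\ge 0$, retain $-\Lambda_{\epsilon,\epsilon_0}(v,u)$ as the defect, treat the combined space-dependent flux and $f_x$-correction terms as an $\mathcal{O}(\eps)$ pairing controlled by $\mu(v(t,\cdot),\eps)$ (the paper invokes the same Karlsen--Risebro estimate here), collapse the doubled $P$-terms to $2\norm{u(t,\cdot)-v(t,\cdot)}_1$ plus the $\nu(v,\eps_0)$ and $\eps$-errors, and close with Gronwall. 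This matches the paper's decomposition into the terms \eqref{eq:K1}--\eqref{eq:K6} and the subsequent bounds step for step.
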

\begin{proof}
  For simplicity we write $\test$ for $\test_{\eps,\eps_0}$.  Using
  that $\test_x=-\test_y$, $\test_t=-\test_s$ and that
  $\Lambda_{\eps,\eps_0}(u,v)\ge 0$, we add
  $\Lambda_{\eps,\eps_0}(u,v)$ and $\Lambda_{\eps,\eps_0}(v,u)$ to
  compute
  \begin{align}
    \Lambda_{\epsilon,\epsilon_0}(v,u) &\le
    \int_{\Pi_T}\int_{\Pi_T}\sgn{u-v}\bigl[(f(x,u) - f(x,v)) \test_{x}
    - f_{x}(x,v) \test \notag 
    \\
    &\hphantom{\le \int_{\Pi_T}\int_{\Pi_T}\sgn{u-v}\bigl[} \; -
    (f(y,u) - f(y,v)) \test_{x} + f_{y}(y,u) \test\bigr]\, dxdtdy
    ds \label{eq:K1} 
    \\ 
    & \quad -\int_{\Pi_T}\int_0^{1}
    \test(T,x,s,y) \abs{u(T,x) - v(s,y)}\, dxdsdy \label{eq:K2} 
    \\ 
    & \quad + \label{eq:K3} \int_{\Pi_T} \int_0^{1} \test(0,x,s,y)
    \abs{u_0(x) - v(s,y)} \, dxdsdy 
    \\ 
    & \quad - \label{eq:K4}
    \int_0^{1} \int_{\Pi_T} \test(y,x,T,y) \abs{u(x,t) - v(y,T)} \,
    dydtdx 
    \\ & \quad + \label{eq:K5} \int_0^{1} \int_{\Pi_T}
    \test(t,x,0,y) \abs{u(t,x) - v_0(y)}\, dydtdx 
    \\ 
    & \quad + \int_{\Pi_T}\int_{\Pi_T}\sgn{v-u} (P[v] -
    P[u])\test(t,x,s,y) \,dxdtdyds.  \label{eq:K6}
  \end{align}
  As is standard for scalar conservation laws, see e.g.,
  \cite{NilsIII}, the terms \eqref{eq:K2} -- \eqref{eq:K5} can be
  estimated to yield the terms containing the initial and final data,
  plus the term starting with ``$\frac12 (\cdots$'' in
  \eqref{eq:KuznetsovEstimateFinal}. The term \eqref{eq:K6} can be
  overestimated as in \cite{GiuseppeVI} by
  \begin{equation*}
    2\int_0^T \norm{u(t,\cdot)-v(t,\cdot)}_1\,dt +
    2T\nu(v,\eps_0)+\eps T\norm{v}_\infty.
  \end{equation*}
  The term \eqref{eq:K1} can by estimated as in \cite{NilsII} by
  \begin{equation*}
    C_f \bigl(2\eps + T\sup_{t\in[0,T]}\mu(v(t,\cdot)
    ,\eps) + \nu(v,\eps_0)\bigr).
  \end{equation*}
  Collecting these bounds
  \begin{align*}
    \norm{u(T,\cdot) - v(T,\cdot)}_1
    &\le \norm{u_0-v_0}_1  - \Lambda_{\eps,\eps_0}(v,u) \\
    &\quad + \frac12 \left(\mu(v(T,\cdot),\epsilon) +
      \mu(u(T,\cdot),\epsilon) +
      \mu(v(0,\cdot),\epsilon) + \mu(u_0,\epsilon)\right)\\
    &\quad + C_f \bigl(2\eps +\eps T\norm{v}_\infty  + T(\sup_{t\in[0,T]}\mu(v(t,\cdot,\eps)
    +\nu(v,\eps_0))+
    \nu(v,\eps_0)\bigr)\\
    & \quad + 2\int_0^T \norm{u(t,\cdot)-v(t,\cdot)}_1\,dt.
  \end{align*}
  The proof is concluded by applying Gronwall's inequality.
\end{proof}
If $v$ is another entropy solution with initial data $v_0$ (satisfying
the zero mean condition), then $\Lambda_{\eps,\eps_0}(v,u)\ge 0$, and
we can send $\eps$ and $\eps_0$ to zero in
\eqref{eq:KuznetsovEstimateFinal} to prove the following.
\begin{theorem}\label{thm:uniqueness}
  If $u$ and $v$ are entropy solutions of the Ostrovsky-Hunter
  equation \eqref{eq:OHprecise} with initial data $u_0$ and $v_0$
  respectively, then
  \begin{equation*}
    \norm{u(T,\cdot) - v(T,\cdot)}_1 \le e^{2 T} \norm{u_0 - v_0}_1
  \end{equation*}
\end{theorem}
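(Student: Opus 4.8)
The plan is to read off the theorem directly from the Kuznetsov-type estimate of Lemma~\ref{lm:kuznetsov}, specialised to the case in which the comparison function $v$ is itself an entropy solution. Since \eqref{eq:KuznetsovEstimateFinal} already isolates every term on its right-hand side, only two things remain to be done: show that the doubling functional contributes with a favourable sign, and then let the two mollification parameters tend to zero so that all the error terms disappear.

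First I would establish that $\Lambda_{\eps,\eps_0}(v,u)\ge 0$. By hypothesis $v$ is an entropy solution, so by Definition~\ref{def:EntropySoln} the inequality \eqref{eq:EntropySoln} holds for $v$ with every constant $k$ and every non-negative, $x$-periodic test function; in the notation of \eqref{eq:DoublingFunctionalI} this says precisely that $L(v,k,\test)\ge 0$. Choosing $k=u(s,y)$ and integrating against the non-negative kernel $\test_{\eps,\eps_0}(\cdot,\cdot,s,y)$ over $(s,y)\in\Pi_T$ preserves the sign, so that $\Lambda_{\eps,\eps_0}(v,u)=\int_{\Pi_T}L(v,u(s,y),\test_{\eps,\eps_0})\,dyds\ge 0$. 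Consequently $-\Lambda_{\eps,\eps_0}(v,u)\le 0$, and this term may simply be dropped from the bracket in \eqref{eq:KuznetsovEstimateFinal}. (Here the zero-mean assumption on $v_0$ is what guarantees that $P[v]$ is periodic, so that the entropy inequality for $v$ is indeed available.)

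It then remains to send $\eps\to 0$ and $\eps_0\to 0$ in the surviving terms. For a fixed time the spatial moduli $\mu(u(T,\cdot),\eps)$, $\mu(v(T,\cdot),\eps)$, $\mu(v(0,\cdot),\eps)$ and $\mu(u_0,\eps)$ tend to $0$ by continuity of translation in $L^1$. For the uniform-in-time modulus $\sup_{t\in[0,T]}\mu(v(t,\cdot),\eps)$ I would use that $v\in C([0,T];L^1(0,1))$ makes the trajectory $\seq{v(t,\cdot):t\in[0,T]}$ a compact subset of $L^1(0,1)$, on which translations are equicontinuous, so the supremum vanishes as $\eps\to 0$. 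Likewise $\nu(v,\eps_0)\to 0$ follows from the uniform continuity of $t\mapsto v(t,\cdot)$ on the compact interval $[0,T]$, and the lone $C_{f,T}\eps$ contribution disappears. Passing to the limit in \eqref{eq:KuznetsovEstimateFinal}, the entire bracket multiplying $(e^{2T}-1)$ goes to zero, leaving $\norm{u(T,\cdot)-v(T,\cdot)}_1\le e^{2T}\norm{u_0-v_0}_1$, which is the assertion.

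I expect the only genuinely delicate point to be the first step, namely the legitimacy of substituting the measurable field $k=u(s,y)$ into the entropy inequality for $v$ and integrating to obtain $\Lambda_{\eps,\eps_0}(v,u)\ge 0$; this is the heart of the Kru\v{z}kov doubling-of-variables device, and it relies on $u$ being a bounded measurable function (together with Fubini) so that the resulting integrals are finite and the sign is preserved. The limit passage is by comparison routine, its sole prerequisite being the $L^1$-in-time continuity of the entropy solutions recorded in Definition~\ref{def:EntropySoln}, which the constructed solution inherits from the a~priori estimates of Section~\ref{sec:Apriori}.
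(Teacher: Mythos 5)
Your proposal is correct and takes essentially the same route as the paper, which likewise deduces Theorem~\ref{thm:uniqueness} by noting that $\Lambda_{\eps,\eps_0}(v,u)\ge 0$ whenever $v$ is an entropy solution with zero-mean initial data, and then sending $\eps,\eps_0\to 0$ in \eqref{eq:KuznetsovEstimateFinal}. The details you supply --- substituting $k=u(s,y)$ into the entropy inequality for $v$ and integrating against the non-negative kernel via Fubini, and verifying that all moduli of continuity vanish because $u,v\in C([0,T];L^{1}(0,1))$ --- are precisely the steps the paper leaves implicit.
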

Observe that since the entropy solution is unique, then the whole
sequence $\seq{u_{\Dx}}$, rather than only a subsequence, converges.

\subsection{Convergence rate}\label{subsec:convrate}
We can use Lemma~\ref{lm:kuznetsov} with $v=u_{\Dx}$ to measure the
$L^1$ error of the finite volume scheme.
\begin{lemma}\label{lm:convergencerate}
  Let $u$ be the entropy solution to \eqref{eq:OHprecise} and let
  $u_{\Dx}$ be the piecewise constant interpolation defined by
  \eqref{eq:udeltadef}, where $u^n_j$ is obtained by the scheme
  \eqref{eq:TheScheme} -- \eqref{eq:SamplingInitial}. Assume that $u_0$
  is in $L^1((0,1))\cap BV([0,1])$, and that $f$ is locally bounded,
  $x$-periodic and twice continuously differentiable. Then there
  exists a constant $C_T$, depending on $f$, $u_0$ and $T$, but not
  on $\eps$, $\eps_0$ or $\Dx$, such that
  \begin{equation}\label{eq:OrderEstimateoriginal}
    -\Lambda_{\epsilon,\epsilon_0}(u_{\Dt},u) \le
    C_{T} \Bigl(\Dt + \Dx + \frac{\Dx}{\epsilon} + \frac{\Dt}{\epsilon_0}\Bigr).
  \end{equation}
\end{lemma}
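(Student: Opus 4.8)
The plan is to show that the piecewise constant approximation $u_{\Dt}$ satisfies the continuous entropy functional $L(u_{\Dt},k,\test)$ up to a controllable defect, by matching $L$ term-by-term against the discrete entropy inequality \eqref{eq:discreteKruzkov} of Lemma~\ref{lm:discreteentropyinequality}. Throughout I fix $(s,y)\in\Pi_T$, set $k=u(s,y)$, and write $\psi(t,x)=\test_{\eps,\eps_0}(t,x,s,y)$, which is non-negative; only at the very end do I integrate the resulting pointwise-in-$(s,y)$ bound.

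First I would exploit that $u_{\Dt}$ is piecewise constant, equal to $u^n_j$ on $I^n_j$, to rewrite each of the four interior integrals in \eqref{eq:DoublingFunctionalI} as sums over the cells $I^n_j$, pulling the constant values $\eta(u^n_j,k)$ and $\eta'(u^n_j,k)=\sgn{u^n_j-k}$ outside the cell integrals of $\psi_t$, $\psi_x$, and $\psi$. A discrete summation by parts in $n$ (for the $\eta\psi_t$ term) and in $j$ (for the $q\psi_x$ term), together with the boundary contributions at $t=0$ and $t=T$, then reproduces exactly the combination $\Dpt\eta(u^n_j,k)$, $\Dm Q_{j+1/2}(u^n_j,k)$, $\sgn{u^{n+1}_j-k}\Dm F_{j+1/2}(k)$, and $\sgn{u^{n+1}_j-k}P^n_j$ appearing in \eqref{eq:discreteKruzkov}, each paired with a cell-averaged or nodal value of $\psi$. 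Multiplying \eqref{eq:discreteKruzkov} by $\Dt\Dx\,\psi(t^n,x_j)\ge0$ and summing then shows that this reassembled discrete expression has the correct sign, so that $L(u_{\Dt},k,\psi)\ge -E$, where $E$ collects all the mismatches introduced by the rewriting.

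The bulk of the work is bounding $E$, whose defects are of three kinds. (i) Replacing the pointwise values of $\psi$, $\psi_t$, $\psi_x$ in $L$ by cell averages or grid values produces, after summation against the uniform $BV$ bound (Lemma~\ref{lm:TVbound}) and time-continuity bound (Lemma~\ref{lm:TimeContinuitybound}), defects controlled by $\Dt\,\norm{\psi_t}_{L^1(\Pi_T)}+\Dx\,\norm{\psi_x}_{L^1(\Pi_T)}$; since $\psi$ is built from the mollifiers $\theta_\eps$ and $\omega_{\eps_0}$ one has $\norm{\psi_x}_{L^1(\Pi_T)}\le C/\eps$ and $\norm{\psi_t}_{L^1(\Pi_T)}\le C/\eps_0$ uniformly in $(s,y)$, giving the $\Dx/\eps$ and $\Dt/\eps_0$ contributions. (ii) Consistency of the monotone flux, $F_{j+1/2}(k,k)=f(x_{j+1/2},k)$, together with $f\in C^2$, yields $\Dm F_{j+1/2}(k)=f_x(x_j,k)+\mathcal{O}(\Dx)$ and $Q_{j+1/2}(u^n_j,k)=q(x_{j+1/2},u^n_j,k)+\mathcal{O}(\Dx)$, so these discrete terms reproduce the continuous $q\,\psi_x$ and $-\eta'f_x\,\psi$ terms with an $\mathcal{O}(\Dx)$ remainder. (iii) The gap $P[u_{\Dt}]-P^n_j$ between the source and its quadrature is $\mathcal{O}(\Dx)$ by the $L^\infty$ bound of Lemma~\ref{lm:Linftybound}. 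Combining (i)--(iii) gives $E\le C_T(\Dt+\Dx+\Dx/\eps+\Dt/\eps_0)$.

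Finally I integrate $L(u_{\Dt},u(s,y),\psi)\ge-E$ over $(s,y)\in\Pi_T$ to recover $-\Lambda_{\eps,\eps_0}(u_{\Dt},u)\le E\cdot|\Pi_T|$; because every estimate in (i)--(iii) is uniform in $k$, integrating in $(s,y)$ only multiplies the bound by the fixed factor $T$, absorbed into $C_T$, and generates no new powers of $1/\eps$ or $1/\eps_0$. I expect the main obstacle to be the bookkeeping in step (i): one must verify that each spatial mismatch is genuinely $\mathcal{O}(\Dx)$ when paired against $\norm{\psi_x}_{L^1(\Pi_T)}\sim1/\eps$ (so that it produces $\Dx/\eps$ and nothing larger), and likewise for the temporal mismatches. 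Keeping the space-dependent term $q(x,u,k)$ and the companion $f_x(x,k)$ term together, and using $f\in C^2$ to control their combined discretization error, is the most delicate point.
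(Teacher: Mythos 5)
Your overall strategy coincides with the paper's: rewrite $L(u_{\Dx},k,\test)$ from \eqref{eq:DoublingFunctionalI} cell-by-cell, sum by parts, subtract the discrete entropy inequality \eqref{eq:discreteKruzkov} tested against the mollifier, and bound the resulting defects via Lemmas~\ref{lm:Linftybound}--\ref{lm:TimeContinuitybound}. But there are two concrete gaps. First, the claim that summation by parts ``reproduces exactly'' the four terms of \eqref{eq:discreteKruzkov} is false for the $f_x$ and $P$ terms: in $L$ these carry the sign factor $\eta'(u_{\Dx},k)=\sgn{u^n_j-k}$ on the cell $I^n_j$, whereas the discrete inequality carries $\sgn{u^{n+1}_j-k}$. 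The difference $\sgn{u^{n+1}_j-k}-\sgn{u^n_j-k}$ is \emph{not} pointwise small (it equals $\pm 2$ on a set that need not be negligible), so it fits into none of your defect classes (i)--(iii). The paper handles it by an additional summation by parts in $n$, transferring the sign jump onto a time-translate of the test function (its term $\text{\eqref{eq:rate4}}_b$, and the analogous piece hidden inside \eqref{eq:rate5}), which produces the boundary contributions of size $\mathcal{O}(\Dt)$ and a $\mathcal{O}(\Dt/\eps_0)$ term. Your error budget omits these; note in particular that your item (iii) assigns only $\mathcal{O}(\Dx)$ to the $P$-term, while the paper's bound for it is $C_T(\Dt+\Dx+\Dt/\eps_0)$ precisely because of this sign mismatch.

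Second, and more quantitatively damaging, the ``fix $(s,y)$, get a uniform bound, then integrate and multiply by $T$'' bookkeeping does not work. Pointwise in $(s,y)$ the defect is generically \emph{not} $C_T(\Dt/\eps_0+\Dx/\eps)$: in, say, the $\eta\,\test_t$ defect, the weight $\abs{\Dpt u^n_j}$ is controlled only in the $\ell^1$-in-$j$ sense $\Dx\sum_j\abs{\Dpt u^n_j}\le C_T$, so if the time variation concentrates in a single cell near $y$ you pay $\norm{\theta_\eps}_\infty\sim 1/\eps$ there and the fixed-$(s,y)$ bound degenerates to $C_T\,\Dt/(\eps\eps_0)$. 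With the rate-optimal choice $\eps=\eps_0\sim\sqrt{\Dx}$ made in Theorem~\ref{thm:halfrate}, $\Dt/(\eps\eps_0)=\mathcal{O}(1)$, so your scheme of ``uniform bound times $T$'' would yield no convergence rate at all. The cure is exactly the order of operations in the paper's proof: integrate over $(s,y)$ \emph{before} estimating, using $\int\theta_\eps(x-y)\,dy=1$ to absorb the spatial mollifier mass and $\int\abs{\omega'_{\eps_0}(\sigma-s)}\,ds\le C/\eps_0$ for the temporal one; the stated orders $\Dx/\eps$ and $\Dt/\eps_0$ then emerge from the $(n,j)$-sums against the $BV$ and time-continuity bounds, not from a sup-in-$(s,y)$ estimate. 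With these two repairs -- the extra summation by parts in $n$ for the sign factors, and integrating in $(s,y)$ first -- your argument becomes the paper's proof.
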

\begin{proof}
  Again we write $\test=\test_{\eps,\eps_0}$, after summation by parts
  we find that 
  \begin{equation*} 
    \begin{aligned}
      L\left(u_\Dx,k,\test\right)&=-\sum_{n=0}^M \sum_{j=1}^N \Dpt
      \eta^n_j \int_{I^n_j} \test^{n+1}\,dxdt -\sum_{n=0}^M
      \sum_{j=1}^N
      \Dm q^n_{j+1/2}\int_{I^n_j} \test_{j+1/2}\,dxdt \\
      &\qquad + \sum_{n=0}^M \sum_{j=1}^N \int_{I^n_j}
      \left(q(x,u_{\Dx})-q(x_{j+1/2},u_{\Dx})\right)\test_x\, dxdt \\
      &\qquad - \sum_{n=0}^M \sum_{j=1}^N \eta^{\prime,n}_j
      \int_{I^n_j} f_x(x,k)\test \,dxdt \\
      &\qquad + \sum_{n=0}^M \sum_{j=1}^N \eta^{\prime,n}_j \int_{I^n_j}
      P\left[u_{\Dx}\right]\test\, dxdt,
    \end{aligned}
  \end{equation*}
  where $L$ is defined in \eqref{eq:DoublingFunctionalI} and
  \begin{equation*}
    \begin{gathered}
      \test^{n+1}=\test(t^{n+1},x),\ \
      \test_{j+1/2}=\test(t,x_{j+1/2}), \ \ 
      \eta^n_j = \abs{u^n_j-k},\\
      q^n_{j+1/2} =
      \sgn{u^n_j-k}(f(x_{j+1/2},u^n_j)-f(x_{j+1/2},k)), \ \
      \text{and}\ \ \eta^{\prime,n}_j= \sgn{u^n_j-k}. 
    \end{gathered}
  \end{equation*}
  Next, multiply the discrete entropy inequality
  \eqref{eq:discreteKruzkov} by $\int_{I^n_j}\test\,dxdt$ and sum over
  $n$ and $j$ to get
  \begin{align*}
    \ell &:= \sum_{n=0}^M \sum_{j=1}^N \Dpt \eta^n_j
    \int_{I^n_j}\test\,dxdt \\
    &\qquad +\sum_{n=0}^M \sum_{j=1}^N \Dm
    Q^n_{j+1/2}\int_{I^n_j}\test\,dxdt\\
    &\qquad +\sum_{n=0}^M \sum_{j=1}^N \eta^{\prime,n+1}_j\Dm
    F_{j+1/2}(k)\int_{I^n_j}\test\,dxdt\\
    &\qquad - \sum_{n=0}^M \sum_{j=1}^N \eta^{\prime,n+1}_j P^m_j
    \int_{I^n_j}\test\,dxdt\\
    &\le 0.
  \end{align*}
  Write $\mathfrak{L}(k)=L\left(u_\Dx,k,\test\right)$, so that $-\mathfrak{L}\le
  -\mathfrak{L}-\ell$. Then
  \begin{align}
    -\mathfrak{L}(k) &\le
    \sum_{n=0}^M \sum_{j=1}^N \Dpt\eta^n_j \int_{I^n_j}
    \left(\test^{n+1}-\test\right)\,dxdt \label{eq:rate1}
    \\
    &\qquad + \sum_{n=0}^M \sum_{j=1}^N \Dm\left(q^n_j -
      Q^n_{j+1/2}\right) \int_{I^n_j} \int_{I^n_j}\test\,dxdt\label{eq:rate2}\\
    &\qquad + \sum_{n=0}^M \sum_{j=1}^N \Dm q^n_{j+1/2} \int_{I^n_j} 
    \left(\test_{j+1/2}-\test\right)\,dxdt\label{eq:rate3}\\
    &\qquad + \sum_{n=0}^M \sum_{j=1}^N \int_{I^n_j} \left(\eta^{\prime,n}
      f_x(x,k) - \eta^{\prime,n+1}\Dm F_{j+1/2}\right) \test\,dxdt\label{eq:rate4} \\
    &\qquad - \sum_{n=0}^M \sum_{j=1}^N \int_{I^n_j}
    \left(\eta^{\prime,n}P[u^n_j] - \eta^{\prime,n+1}P^n_j\right)
    \test\,dxdt\label{eq:rate5}\\
    &\qquad -\sum_{n=0}^M \sum_{j=1}^N \int_{I^n_j}
      \left(q(x,u_{\Dx})-q(x_{j+1/2},u_{\Dx})\right)\test_x\, dxdt.\label{eq:rate6}
  \end{align}
  We need to bound $-\int_{\Pi_T}\mathfrak{L}(u(s,y))\,dyds$. The integral
  of the first term on the right can be bounded as follows,
  \begin{align*}
    \Bigl| \int_{\Pi_T} \text{\eqref{eq:rate1}}\,dyds\Bigr| &=
    \Bigl|\sum_{n=0}^M \sum_{j=1}^N \int_{\Pi_T} \Dpt \eta^n_j
    \int_{I^n_j}
    \left(\omega_{\eps_0}(t^{n+1}-s)-\omega_{\eps_0}(t-s)\right)
    \theta_\eps(x-y)\,dxdtdyds\Bigr| \\
    &\le \sum_{n=0}^M \sum_{j=1}^N \abs{\Dpt u^n_j}
    \int_{\Pi_T}\int_{I^n_j}\int_t^{t^{n+1}}
    \abs{\omega'_{\eps_0}(\sigma-s)}\,d\sigma\,
    \theta_\eps(x-y)\,dxdtdyds\\
    &\le \sum_{n=0}^M \sum_{j=1}^N \abs{\Dpt u^n_j} \frac{C}{\eps_0}
    \int_{I^n_j} \left(t^{n+1}-t\right)\,dxdt\\
    &\le \frac{C}{\eps_0} \sum_{j=0}^M \Dt^2 C_T\\
    &\le C_T \frac{\Dt}{\eps_0},
  \end{align*}
  where we have used \eqref{eq:TimeContiBound} and
  \eqref{eq:timecontinitial}. The terms \eqref{eq:rate2} and \eqref{eq:rate3} can be bounded
  similarly, 
  \begin{equation*}
    \Bigl|\int_{\Pi_T} \text{\eqref{eq:rate2}} +
    \text{\eqref{eq:rate3}}\,dyds\Bigr|\le C_T \frac{\Dx}{\eps}.
  \end{equation*}
  Next, we consider \eqref{eq:rate4}, we split this into a sum of two terms
  \begin{align*}
    \text{\eqref{eq:rate4}}_a &= \sum_{n,j} \int_{I^n_j}
    \eta^{\prime,n+1}_j
    \left(f_x(x,k)-\Dm F_{j+1/2}(k)\right) \test\,dxdt\\
    \text{\eqref{eq:rate4}}_b &= -\sum_{n,j}
    \left(\eta^{\prime,n+1}_j-\eta^{\prime,n}_j\right)
    f_x(x,k)\,dxdt. 
  \end{align*}
  The second of these can be bounded by summation by parts,
  \begin{align*}
    \text{\eqref{eq:rate4}}_b &= -\sum_{j=1}^N \eta^{\prime,M+1}_j
    \int_{I^M_j} f_x(x,k) \test\,dxdt + \sum_{j=1}^N \eta^{\prime,0}_j
    \int_{I^0_j} f_x(x,k) \test\,dxdt \\
    &\qquad + \sum_{n=1}^M \sum_{j=1}^N \eta^{\prime,n}_j
    \int_{I^n_j} f_x(x,k) (\test(t,x,s,y)-\test(t,x-\Dt,s,y) \,dxdt.
  \end{align*}
  The first integral of the first term of this expression can be
  estimated as
  \begin{align*}
    \Bigl| \int_{\Pi_T} \sum_{j=1}^N &\eta^{\prime,M+1}_j
    \int_{I^M_j} f_x(x,k) \test\,dxdtdyds\Bigr|\\
    &\le \norm{f_x}_\infty \sum_{j=1}^N  \Dx\Dt \\
    &\le C \Dt.
  \end{align*}
  The bound on the second term is identical. To bound
  $\text{\eqref{eq:rate4}}_b$ we must bound the integral of the last term,
  \begin{align*}
    \Bigl|\int_{\Pi_T}\sum_{n=1}^M \sum_{j=1}^N &\eta^{\prime,n}_j
    \int_{I^n_j} f_x(x,k) (\test(t,x,s,y)-\test(t,x-\Dt,s,y)
    \,dxdtdyds \Bigr|\\
    &\le \norm{f_x}_\infty \sum_{n=1}^M\sum_{j=1}^N  
    \int_{\Pi_T}\int_{I^n_j} \int_{t-\Dt}^t
    \abs{\omega_{\eps_0}(\sigma-s)}\,d\sigma dxdtdyds\\
    &\le \norm{f_x}_\infty \frac{C\Dt}{\eps_0}
    \sum_{n=1}^M\sum_{j=1}^N \Dx\Dt\\
    &\le C_T \frac{\Dt}{\eps_0}.
  \end{align*}
  To bound the integral of $\text{\eqref{eq:rate4}}_a$ we use the
  continuity of $f_x$  and the observation that
  \begin{equation*}
    \dm F_{j+1/2}(k)=\dm F(x_{j+1/2},k,k)) = \dm f(x_{j+1/2},k),
  \end{equation*}
  which implies that
  \begin{equation*}
    \Dm F_{j+1/2}(k)=f_x(\xi_j,k)
  \end{equation*}
  for some $\xi_j\in I_j$.
  Therefore
  \begin{align*}
    \Bigl|\int_{\Pi_T} \text{\eqref{eq:rate4}}_a \,dyds\Bigr|
    &=\Bigl| \int_{\Pi_T} \sum_{n,j} \int_{I^n_j}
    \left(f_x(x,k)-f_x(\xi_j)\right)\test\,dxdtdtds\Bigr|  \\
    &\le \Dx\norm{f_{xx}}_\infty \sum_{n,j}\Dx\Dt\\
    &\le C_T \Dx.
  \end{align*}
  Hence 
  \begin{equation*}
    \Bigl|\int_{\Pi_T} \text{\eqref{eq:rate4}}\,dyds \Bigr| \le 
    C_T \Bigl(\Dx + \frac{\Dt}{\eps_0}\Bigr).
  \end{equation*}
  
  The term \eqref{eq:rate5} is bounded in
  \cite[Section~6.2]{GiuseppeVI} as
  \begin{equation*}
    \Bigl|\int_{\Pi_T} \text{\eqref{eq:rate5}}\,dyds \Bigr| \le 
    C_T\Bigl( \Dt+\Dx + \frac{\Dt}{\eps_0}\Bigr).
  \end{equation*}
  The last term, the integral of \eqref{eq:rate6}, can be bounded using
  the Lipschitz continuity of $q$,
  \begin{align*}
    \Bigl|\int_{Pi_T} \text{\eqref{eq:rate6}}\,dyds\Bigr|&\le
    \norm{q_x}_\infty\int_{\Pi_T} \sum_{n,j} \int_{I^n_j} \left(x_{j+1/2}-x\right)
    \abs{\theta'_\eps(x-y)}\omega_{\eps_0}(t-s)\, dxdtdyds \\
    &\le \frac{C}{\eps} \sum_{n,j}\Dx^2\Dt\\
    &\le C_T \frac{\Dx}{\eps}.
  \end{align*}
  The proof is concluded by collecting the bounds on the integrals of all the terms
  \eqref{eq:rate1} -- \eqref{eq:rate6}. 
\end{proof}
From Lemma~\ref{lm:convergencerate} it easily follows that $u_{\Dx}$
converges at a rate $1/2$.
\begin{theorem}\label{thm:halfrate}
  Let $u$ and $u_{\Dx}$ be as in Lemma~\ref{lm:convergencerate}. then
  \begin{equation*} 
    \norm{u(T,\cdot)-u_{\Dx}(T,\cdot)}_1 \le C_T \sqrt{\Dx},
  \end{equation*}
  where $C_{T}$ is a constant independent of $\Dx$.
\end{theorem}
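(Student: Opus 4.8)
The plan is to combine the Kuznetsov-type estimate of Lemma~\ref{lm:kuznetsov} with the consistency estimate of Lemma~\ref{lm:convergencerate}, taking $v=u_{\Dx}$, and then to optimize the free parameters $\eps$ and $\eps_0$. Since $u_{\Dx}\in C([0,T];L^1(0,1))\cap L^\infty(\Pi_T)$ and satisfies the zero mean condition, it is an admissible competitor in \eqref{eq:KuznetsovEstimateFinal}. Substituting $v=u_{\Dx}$ there, the leading term on the right is $e^{2T}\norm{u_0-u_{\Dx}(0,\cdot)}_1$, and the remaining contributions are $-\Lambda_{\eps,\eps_0}(u_{\Dx},u)$ together with the moduli-of-continuity terms.

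First I would dispose of the data term: since $u^0_j$ is the average of $u_0$ over $I_j$ and $u_0\in BV$, the standard estimate $\norm{u_0-u_{\Dx}(0,\cdot)}_1\le \Dx\,\abs{u_0}_{BV([0,1])}$ holds, so this contributes at order $\Dx$. Next I would insert the bound from Lemma~\ref{lm:convergencerate}, namely $-\Lambda_{\eps,\eps_0}(u_{\Dx},u)\le C_T(\Dt+\Dx+\Dx/\eps+\Dt/\eps_0)$. For the moduli I would use Remark~\ref{rem:modulusofcontinuity}: the spatial modulus satisfies $\mu(u_{\Dx}(t,\cdot),\eps)\le \eps\,\abs{u_{\Dx}(t,\cdot)}_{BV}$, which by the uniform $BV$ bound of Lemma~\ref{lm:TVbound} is $\le C_T\eps$ uniformly for $t\in[0,T]$; similarly $\mu(u(T,\cdot),\eps)\le \eps\,\abs{u(T,\cdot)}_{BV}$ and $\mu(u_0,\eps)\le \eps\,\abs{u_0}_{BV}$, where the $BV$ bound for the exact solution is inherited in the limit from Lemma~\ref{lm:TVbound}. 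Finally $\nu(u_{\Dx},\eps_0)\le (\eps_0+\Dt)C_T$. Collecting everything gives
\begin{equation*}
  \norm{u(T,\cdot)-u_{\Dx}(T,\cdot)}_1 \le C_T\Bigl(\Dx+\Dt+\eps+\eps_0+\frac{\Dx}{\eps}+\frac{\Dt}{\eps_0}\Bigr).
\end{equation*}

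It then remains to optimize. Balancing $\eps$ against $\Dx/\eps$ suggests $\eps=\sqrt{\Dx}$, and balancing $\eps_0$ against $\Dt/\eps_0$ suggests $\eps_0=\sqrt{\Dt}$; with these choices each of the four corresponding terms becomes $O(\sqrt{\Dx})$ or $O(\sqrt{\Dt})$. Invoking the CFL relation \eqref{eq:CFL}, $\Dt\le c_f\Dx$, we have $\sqrt{\Dt}\le C\sqrt{\Dx}$ and $\Dt\le C\Dx\le C\sqrt{\Dx}$ for $\Dx\le 1$, so all terms are dominated by $C_T\sqrt{\Dx}$, which is the claim.

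I expect the only real subtlety to be bookkeeping: checking that every modulus and $BV$ quantity entering \eqref{eq:KuznetsovEstimateFinal} is bounded uniformly in $\Dx$ (which is exactly what Lemmas~\ref{lm:Linftybound}--\ref{lm:TimeContinuitybound} and Remark~\ref{rem:modulusofcontinuity} provide), and confirming that the $BV$ bound for the exact entropy solution $u$ survives the limit $\Dx_k\to0$. Once the uniform bounds are in place, the convergence rate is a direct consequence of the two-parameter optimization and the CFL constraint, with no further analytic difficulty.
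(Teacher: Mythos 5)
Your proposal is correct and follows essentially the same route as the paper's own proof: both combine Lemma~\ref{lm:kuznetsov} with $v=u_{\Dx}$, insert the bound on $-\Lambda_{\eps,\eps_0}(u_{\Dx},u)$ from Lemma~\ref{lm:convergencerate}, use $\norm{u_{\Dx}(0,\cdot)-u_0}_1\le C\Dx$ together with the fact that all moduli of continuity of $u_{\Dx}$ are uniformly linear, and then balance the parameters, the paper taking $\eps=\eps_0=\sqrt{\Dt}=C\sqrt{\Dx}$ while your choice $\eps=\sqrt{\Dx}$, $\eps_0=\sqrt{\Dt}$ is the same up to the CFL relation \eqref{eq:CFL}. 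Your extra bookkeeping (uniform $BV$ bounds surviving the limit, the time modulus from Remark~\ref{rem:modulusofcontinuity}) merely makes explicit what the paper leaves implicit.
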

\begin{proof}
  This result follows by setting $\eps=\eps_0=\sqrt{\Dt}=C\sqrt{\Dx}$
  in \eqref{eq:OrderEstimateoriginal}.  We have that $u_{\Dx}(0,x)$  is defined in
  \eqref{eq:SamplingInitial}, therefore
  $\norm{u_{\Dx}(0,\cdot)-u_0}_1\le C\Dx$ since $u_0$ is in $BV$. To
  conclude the proof apply
  Lemma~\ref{lm:kuznetsov},  and recall that for
  $u_{\Dx}$, all moduli of continuity are uniformly linear in the last
  argument.
\end{proof}

\section{Numerical examples}\label{sec:Numerics}
In this section we complement our theoretical results by two numerical
experiments. Both experiments use the flux function 
\begin{equation*}
  f(x,u)=\frac12 u^2 \exp(\sin(2\pi x)).
\end{equation*}
As far as we know, with $f$ given above, there are no solutions to
\eqref{eq:OHprecise} in closed form. When measuring the accuracy of
the approximations, we therefor use an approximation generated by the
finite volume scheme with a small $\Dx$. We used the Engquist-Osher
numerical flux
\begin{equation*}
  F(x,u,v)=\frac12 \exp(\sin(2\pi x)) \left( (\mx{u}{0})^2 + (\mi{v}{0})^2\right).
\end{equation*}

Our first example uses initial data that coincides with those of the
so-called ``corner wave''. This is a closed form  solution of the
OH-equation with $f=u^2/2$, but not so in our case.
This corner wave initial data is given by
\begin{equation}\label{eq:cornerwave}
  u_0(x) =
  \begin{cases}
    \frac{1}{6} \bigl(x - \frac12\bigr)^2 + \frac16\bigl(x -
    \frac12\bigr) + \frac{1}{36},  &\text{for $x \in [0,1/2)$,} 
    \\ 
    \frac{1}{6} \bigl(x - \frac12\bigr)^2 - \frac16\bigl(x - \frac12\bigr) +
    \frac{1}{36},  &\text{for $x \in [1/2,1]$.}
  \end{cases}
\end{equation}
Figure~\ref{fig:1} shows the initial data, as well as the approximate
solutions at $t=36$, with $\Dx=2^{-8}$ and the reference solution
using $\Dx=2^{-13}$ for $t=36$. 
\begin{figure}[h]
  \centering
  \includegraphics[width=0.8\textwidth]{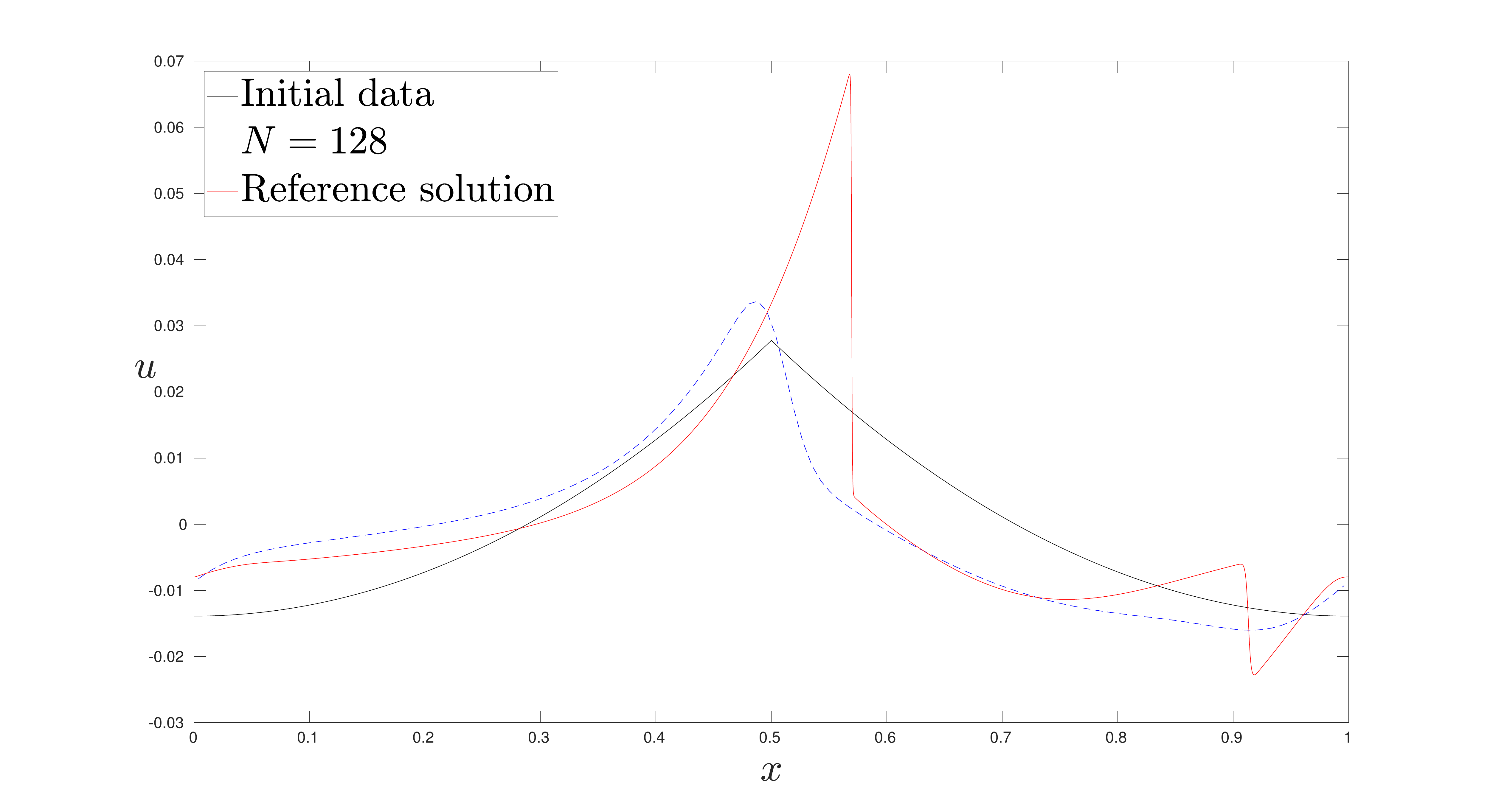}
  \caption{The initial data given by \eqref{eq:cornerwave} and the
      approximate solution with $N=128$ and the reference solution at
      $t=36$.}
  \label{fig:1}
\end{figure}
The second example uses smoother initial data
\begin{equation}
  \label{eq:LiuInitial}
  u_0(x) = -0.05\cos(2\pi x),
\end{equation}
and Figure~\ref{fig:2} shows the approximations for $t=36$, $\Dx=2^{-7}$
and $\Dx=2^{-13}$.
\begin{figure}[h]
  \centering
    \includegraphics[width=0.8\textwidth]{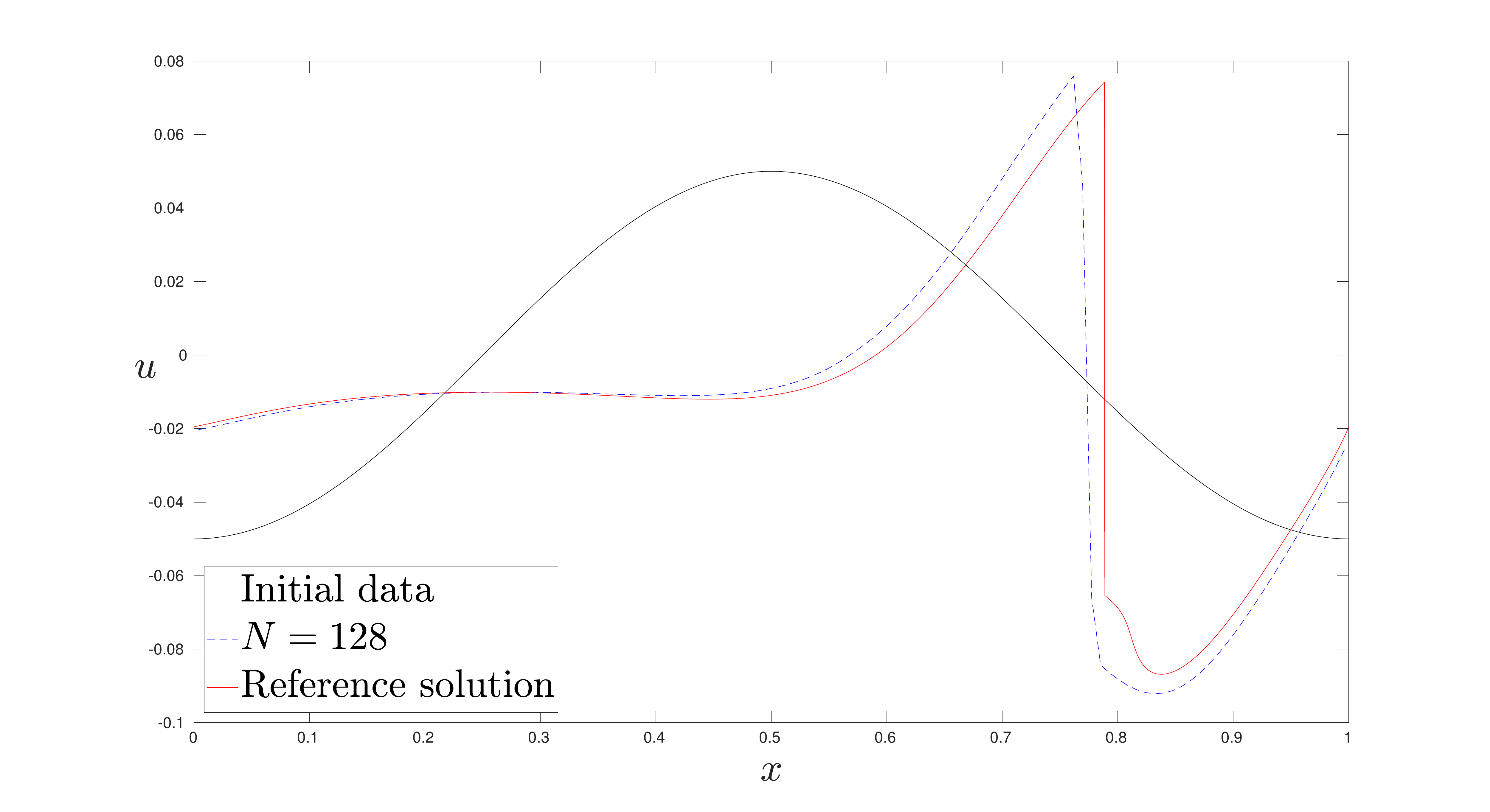}
    \caption{The initial data given by \eqref{eq:LiuInitial} and the
      approximate solution with $N=128$ and the reference solution at
      $t=36$.}
  \label{fig:2}
\end{figure}
We observe that although the data are smooth, the solution seems to
have a discontinuity.

By running the scheme with different $\Dx$, we can try to estimate the
convergence rate numerically. In Table~\ref{tab:1} we show the
relative $L^1$-errors, defined by
\begin{equation*}
  E = 100
  \frac{\norm{u_{\Dt}(t,\cdot)-u_{\mathrm{ref}}(t,\cdot)}_1}
  {\norm{u_{\mathrm{ref}}(t,\cdot)}_1}. 
\end{equation*}
We have done this for both examples, and as
a reference solution, $u_{\mathrm{ref}}$, we used the finite volume
approximation with $\Dx=2^{-13}$.
\begin{table}[h]
  \centering
  \begin{tabular}{r|rr|rr}
    & \multicolumn{2}{c|}{Initial data \eqref{eq:cornerwave}} & \multicolumn{2}{c}{Initial data \eqref{eq:LiuInitial}} \\
    $N$ & \multicolumn{1}{c}{$E$}  &  rate  & \multicolumn{1}{c}{$E$}   & rate \\  \hline
    32  & 56.4 &        & 65.3  &      \\
    64  & 40.9 & 0.5   & 39.5   & 0.7  \\
    128 & 26.4 & 0.6   & 21.8   & 0.9  \\
    256 & 15.5 & 0.8   & 11.4   & 0.9  \\
    512 &  7.8 & 1.0   &  5.9   & 1.0  \\
    1024&  3.7 & 1.1   &  2.7   & 1.1  \\
    2048&  1.6 & 1.2   &  1.2   & 1.2  \\
  \end{tabular}
  \caption{Errors and numerical convergence rate in $L^1$  at $t=36$.}
  \label{tab:1}
\end{table}
We observe that the convergence rates are higher that the
theoretically proven rate. Although we are measuring
``self-convergence'', it may well be the case that when the solution
is a smooth as our examples seem to show (continuously differentiable
except for a single discontinuity), the actual convergence rate is
higher than $1/2$.


\begin{thebibliography}{9}
  \bibitem{Ostrovsky}
    Ostrovsky, L.A.
    \textit{Nonlinear internal waves in a rotating ocean}.
    Okeanologia.
    18, 119–125 (1978).
  \bibitem{HunterI}
    Hunter, J.
    \textit{Numerical solutions of some nonlinear dispersive wave equations. Computational solution of nonlinear systems of equations} (Fort Collins, CO, 1988)
    Lectures in Appl. Math.
    26, Amer. Math. Soc., Providence, RI, 301–316, 1990.
  \bibitem{VakhnenkoI}
    Vakhnenko, V.O.
    \textit{Solitons in a nonlinear model medium}. 
    J. Phys. A Math. Gen. 
    25, 4181–4187 (1992).
  \bibitem{VakhnenkoII}
    Morrison, J.A.; Parkes, E.J.; and Vakhnenko, V.O. 
    \textit{The N loop soliton solutions of the Vakhnenko equation}. 
    Nonlinearity.
    12(5):1427–1437, 1999.
  \bibitem{VakhnenkoIII}
    Parkes, E. J. and Vakhnenko, V. O.
    \textit{The calculation of multi-soliton solutions of the Vakhnenko equation by the inverse scattering method}. 
    Chaos, Solitons and Fractals.
    13(9):1819–1826, 2002.
  \bibitem{RuvoThesis}
    di Ruvo, L.
    \textit{Discontinuous solutions for the Ostrovsky-Hunter equation and two-phase flows}.
    PhD thesis, University of Bari (2013).
  \bibitem{HunterII}
    Hunter, J. and Tan, K.P.
    \textit{Weakly dispersive short waves}. 
    Proceedings of the IVth international Congress on Waves and Stability in Continuous Media (1987).
  \bibitem{ParkesI}
    Parkes, E.J.
    \textit{Explicit solutions of the reduced Ostrovsky equation}.
    Chaos Solitons Fractals.
    31, 602–610 (2007).
  \bibitem{ParkesII}
    Parkes, E.J.
    \textit{The stability of solutions of Vakhnenko’s equation}.
    J. Phys. A Math. Gen. 
    26, 6469–6475 (1993).
  \bibitem{Stepanyants}
    Stepanyants, Y.A.
    \textit{On the stationary solutions of the reduced Ostrovsky equation: periodic waves, compactons and compound solitons}. 
    Chaos Solitons Fractals. 
    28, 193–204 (2006).
  \bibitem{Boutet}
    Boutet de Monvel, A. and Shepelsky, D.
    \textit{The Ostrovsky-Vakhnenko equation: a Riemann-Hilbert approach}. 
    Comptes Rendus Mathematique.
    352, 189–195 (2014).
  \bibitem{Brunelli}
    Brunelli, J.C. and Sakovich, S. 
    \textit{Hamiltonian structures for the Ostrovsky-Vakhnenko equation}.
    Commun. Nonlinear Sci. Numer. Simul.
    18, 56–62 (2013).
  \bibitem{AVB}
    Amiranashvili, S. ; Vladimirov, A.G. and Bandelow, U.
    \textit{A model equation for ultrashort optical pulses}.
    Eur. Phys. J. D. 
    58, 219 (2010).
  \bibitem{Liu}
    Liu, Y.; Pelinovsky, D.; and Sakovich, A.
    \textit{Wave breaking in the short-pulse equation}.
    Dyn. Part. Differ Equ. 
    6, 291–310 (2009).
  \bibitem{Schafer}
    Schäfer, T.; and Wayne, C.E.
    \textit{Propagation of ultra-short optical pulses in cubic nonlinear media}. 
    Physica D.
    196, 90–105 (2004).
 
  \bibitem{NilsII}
    Karlsen, K.H.; and Risebro, N.H.
    \textit{Convergence of finite difference schemes for viscous and inviscid conservation laws with rough coefficients}
     M2AN Math. Model. Numer. Anal.
    9 (2003), no. 5, 1081–1104
  \bibitem{NilsIII}
    Holden, H., Risebro, N.H.
    \textit{Front tracking for hyperbolic conservation laws}
    vol. 152. Springer, Berlin, Heidelberg (2015)
  
  \bibitem{GiuseppeI}
    Coclite, G.M. and di Ruvo, L. 
    \textit{Oleinik type estimates for the Ostrovsky-Hunter equation}.
    J. Math. Anal. Appl. 
    423, 162–190 (2015)
  \bibitem{GiuseppeII}
    Coclite, G.M. and di Ruvo, L.
    \textit{Well-posedness of bounded solutions of the non-homogeneous initial-boundary value problem for the Ostrovsky-Hunter equation}.
    J. Hyperbolic Differ. Equ. 
    12(2), 221–248, (2015).
  \bibitem{GiuseppeIII}
    Coclite, G.M. and di Ruvo, L.
    \textit{Well-posedness results for the short pulse equation}.
    Z. Angew. Math. Phys.
    66(4), 1529–1557, (2015).
  \bibitem{GiuseppeIV}
    Coclite, G.M.; di Ruvo, L.; and Karlsen, K.H.
    \textit{Some wellposedness results for the Ostrovsky-Hunter equation}. 
    In: Hyperbolic conservation laws and related analysis with applications, volume 49 of Springer Proc. Math. Stat.
    pp. 143–159. Springer, Heidelberg (2014).
  
  \bibitem{GiuseppeVI}
  	Coclite, G.M.; Ridder, J; and Risebro, N.H.
  	\textit{A convergent finite difference scheme for the Ostrovsky-Hunter equation on a bounded domain}.
  	BIT Numer. Math.
  	57, 93-122, (2017). 
  
 
    \bibitem{Johnson}
    Johnson, R.S.
     \textit{On the development of a solitary wave moving over an uneven
              bottom}
   Proc. Cambridge Philos. Soc.,
   73, pages 183--203, 1973.
\bibitem{GrimshawI}
   Grimshaw, R.
    \textit{Evolution equations for long, nonlinear internal waves in
              stratified shear flows},
   Stud. Appl. Math.,
   65(2), 159--188, 1981
\bibitem{GrimshawII}
Grimshaw, R.,
\textit{Internal Solitary Waves},
Environmental Stratified Flows,
1-27, 2002,
Springer US,
Boston, MA,

\bibitem{Kuznetsov}
    Kuznetsov, N.N.
    \textit{The accuracy of certain approximate methods for the
              computation of weak solutions of a first order quasilinear
              equation}.
    \v{Z}. Vy\v{c}isl. Mat. i Mat. Fiz. 
16(6), 1489-1502, 1976

\end{thebibliography}
\end{document}